\definecolor{codegreen}{rgb}{0,0.6,0}
\definecolor{codegray}{rgb}{0.5,0.5,0.5}
\definecolor{codepurple}{rgb}{0.58,0,0.82}
\definecolor{backcolour}{rgb}{0.95,0.95,0.92}
\lstdefinestyle{mystyle}{
    backgroundcolor=\color{backcolour},   
    commentstyle=\color{codegreen},
    keywordstyle=\color{magenta},
    numberstyle=\tiny\color{codegray},
    stringstyle=\color{codepurple},
    basicstyle=\ttfamily\footnotesize,
    breakatwhitespace=false,         
    breaklines=true,                 
    captionpos=b,                    
    keepspaces=true,                 
    numbers=left,                    
    numbersep=5pt,                  
    showspaces=false,                
    showstringspaces=false,
    showtabs=false,                  
    tabsize=2
}
  \newcommand{\iso}{\mathrel{\cong}}
  \newcommand{\op}{{\mathord\mathrm{op}}}
  \newcommand{\id}[1][]{\operatorname{id}_{#1}}
  \renewcommand{\tilde}{\widetilde}
  \renewcommand{\hat}{\widehat}
  \newcommand{\cat}[1]{\mathscr{#1}}
  \newcommand{\from}{\colon}
  \declaretheorem[style=definition,within=section]{definition}
  \declaretheorem[style=definition,numberlike=definition]{example}
  \declaretheorem[style=definition,numberlike=definition]{remark}
  \declaretheorem[style=definition,numberlike=definition]{assumption}
  \declaretheorem[style=plain,numberlike=definition]{corollary}
  \declaretheorem[style=plain,numberlike=definition]{lemma}
  \declaretheorem[style=plain,numberlike=definition]{proposition}
  \declaretheorem[style=plain,numberlike=definition]{theorem}
  \declaretheorem[style=plain,numbered=no,name=Theorem]{theorem*}
  \Crefname{corollary}{Corollary}{Corollaries}
  \Crefname{definition}{Definition}{Definitions}
  \Crefname{lemma}{Lemma}{Lemmas}
  \Crefname{proposition}{Proposition}{Propositions}
  \Crefname{remark}{Remark}{Remarks}
  \Crefname{theorem}{Theorem}{Theorems}
  \Crefname{notation}{Notation}{Notations}
  \newlist{axioms}{enumerate}{1}
  \Crefname{axiomsi}{}{}
  \newenvironment{tikzeq*}
  {
    \begingroup
    \begin{equation*}
    \begin{tikzpicture}[baseline=(current bounding box.center)]
  }
  {
    \end{tikzpicture}
    \end{equation*}
    \endgroup
    \ignorespacesafterend
  }
  \tikzset
  {
    diagram/.style=
    {
      matrix of math nodes,
      column sep={4.3em,between origins},
      row sep={4em,between origins},
      text height=1.5ex,
      text depth=.25ex
    },
    over/.style={preaction={draw=white,-,line width=6pt}},
    every to/.style={font=\footnotesize},
    inj/.style={right hook->},
    surj/.style={-{Latex[open]}},
    cof/.style={>->},
    fib/.style={->>},
  }
  \DeclareFontFamily{U}{mathx}{\hyphenchar\font45}
  \DeclareFontShape{U}{mathx}{m}{n}{
    <5> <6> <7> <8> <9> <10>
    <10.95> <12> <14.4> <17.28> <20.74> <24.88>
    mathx10}{}
  \DeclareSymbolFont{mathx}{U}{mathx}{m}{n}
  \DeclareFontFamily{U}{mathb}{\hyphenchar\font45}
  \DeclareFontShape{U}{mathb}{m}{n}{
    <5> <6> <7> <8> <9> <10>
    <10.95> <12> <14.4> <17.28> <20.74> <24.88>
    mathb10}{}
  \DeclareSymbolFont{mathb}{U}{mathb}{m}{n}
  \DeclareMathAccent{\widebar}{0}{mathx}{"73}
  \DeclareMathSymbol{\Rsh}{\mathrel}{mathb}{"E9}
  \DeclareFontFamily{U}{MnSymbolA}{}
  \DeclareFontShape{U}{MnSymbolA}{m}{n}{
    <-6> MnSymbolA5
    <6-7> MnSymbolA6
    <7-8> MnSymbolA7
    <8-9> MnSymbolA8
    <9-10> MnSymbolA9
    <10-12> MnSymbolA10
    <12-> MnSymbolA12}{}
  \DeclareSymbolFont{MnSyA}{U}{MnSymbolA}{m}{n}
  \DeclareMathSymbol{\twoheaddownarrow}{\mathrel}{MnSyA}{27}
  \newcommand{\MSC}[1]{%
    \let\thempfn\relax
    \footnotetext[0]{2020 Mathematics Subject Classification: #1.}
  }
\tikzstyle{vertex}=[circle, draw, minimum size=7pt, inner sep=0pt]
\newcommand{\Ab}{\textsf{Ab}}
\newcommand{\Graph}{\mathsf{Graph}} 
\newcommand{\Set}{\mathsf{Set}} 
\newcommand{\lan}{\textsf{lan}}
\newcommand{\Colim}{\textsf{Colim}}
\newcommand{\bbG}{\mathbb{G}} 
\DeclareFontFamily{U}{dmjhira}{}
\DeclareFontShape{U}{dmjhira}{m}{n}{ <-> dmjhira }{}
\DeclareRobustCommand{\yo}{\text{\usefont{U}{dmjhira}{m}{n}\symbol{"48}}}
\author{Krzysztof Kapulkin \and Nathan Kershaw} 
\title{Closed symmetric monoidal structures on the category of graphs}
\date{September 30, 2023}
\begin{document}

  \maketitle

\begin{abstract}
We show that the category of (reflexive) graphs and graph maps carries exactly two closed symmetric monoidal products: the box product and the categorical product.
\end{abstract}



\section*{Introduction}

Several notions of products are considered in graph theory, including the box product (also occasionally called cartesian product), the Kronecker product, the lexicographic product, and the strong product, among others \cite{imrich-klavzar}.
Understanding different combinatorial properties of these products remains an active area of research within combinatorics.

In this paper, we approach this question from the categorical perspective, namely, we would like to understand which graph products define a \emph{closed symmetric monoidal} structure on the category of graphs.
Monoidal structures are a natural framework for capturing abstract products that can be considered on a given category.

Our interest in the subject comes from discrete homotopy theory, an area of mathematics concerned with applying techniques from topology, and more precisely homotopy theory, to study combinatorial properties of graphs.
Numerous such theories are now under active development, including the A-homotopy theory \cite{babson-barcelo-longueville-laubenbacher,barcelo-laubenbacher,carranza-kapulkin:cubical-graphs} and the $\times$-homotopy theory \cite{dochtermann,chih-scull}, among others.
Different models of discrete homotopy theory use different graph products, but end up proving similar results often with similar looking proofs.
This leads to the question of whether this development can be done synthetically by axiomatizing the desired properties of the product.
This paper is therefore a first step towards such synthetic theory.

Our main theorem is:

\begin{theorem*}[\cref{main th}]
  The category of (reflexive) graphs carries precisely two closed symmetric monoidal structures: the box product and the categorical product.
\end{theorem*}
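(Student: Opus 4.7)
My plan is to use Day's theorem, which identifies closed symmetric monoidal structures on a presheaf category $\hat{\G}$ with symmetric promonoidal structures on $\G$. Since the category of (reflexive) graphs is the presheaf category on a small category $\G$ with only two objects $V$ and $E$, any such structure is determined---up to the required coherence---by the three graphs $\yo(V)\otimes\yo(V)$, $\yo(V)\otimes\yo(E)$, and $\yo(E)\otimes\yo(E)$, together with a unit object $I$.

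I would begin by constraining the unit $I$. From the unit isomorphisms $I\otimes\yo(V)\cong\yo(V)$ and $I\otimes\yo(E)\cong\yo(E)$, together with the fact that $\otimes$ preserves colimits in each variable (being a left adjoint), I expect $I$ to fall into a very short list of possibilities. In the reflexive case, $I$ should be forced to be the terminal graph $\yo(V)$; in the non-reflexive case, $I$ could be either $\yo(V)$ (a single vertex with no edges) or the terminal graph (a single vertex with a loop), corresponding respectively to what will turn out to be the box and categorical products. Once the unit is fixed, the unit isomorphisms fully determine the tensors involving $\yo(V)$.

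The main combinatorial step is then the analysis of $G:=\yo(E)\otimes\yo(E)$. Symmetry makes $G$ invariant under swapping factors; associativity applied to $\yo(E)^{\otimes 3}$ rigidifies $G$ further; and closedness---the existence of a right adjoint to $-\otimes\yo(E)$---eliminates yet more candidates. I would carry out a finite case analysis over possible graph structures on (a quotient of) the four vertex-pairs and argue that only the box-product structure (a four-cycle with reflexive loops) and the categorical-product structure (which additionally includes the diagonal edges) survive. Finally I would verify that each surviving candidate extends uniquely to a closed symmetric monoidal structure, recognizable as the box and categorical products.

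The principal obstacle will be the case analysis in the previous step: ruling out ``strong-product-like'' intermediate structures, which are symmetric and associative but ought not to be closed. I expect the key leverage to come from closedness---the putative right adjoint $[\yo(E),-]$ must be a well-defined endofunctor on $\hat{\G}$ satisfying the adjunction identities---and I anticipate this failing for intermediate choices of $G$ that include only some but not all of the diagonal edges.
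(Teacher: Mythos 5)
There is a genuine gap at the very first step: the category of (reflexive, simple) graphs in this paper is \emph{not} a presheaf category, so \cref{TH:Day convolution} does not apply to it directly. The presheaf category $\Set^{\mathbb{G}^{\op}}$ is the category of \emph{marked multigraphs}; $\Graph$ is only the full reflective subcategory of presheaves for which $(s,t)\colon X_E\to X_V\times X_V$ is monic. Your proposal silently identifies the two, and everything downstream (``determined by the three graphs $\yo(V)\otimes\yo(V)$, \dots'') rests on that identification. To make the strategy work one must first prove that a closed symmetric monoidal structure on the \emph{reflective subcategory} is still recovered as the left Kan extension of its restriction along the (factored) Yoneda embedding; this is \cref{Prop: monoidal as kan extensions} in the paper, and its proof is not formal --- it uses the reflector $L$, a comparison of comma categories, and cocontinuity in each variable coming from closedness. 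Without this, the reduction to the single datum $\yo(E)\otimes\yo(E)$ is unjustified.

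Beyond that, your case analysis presupposes that $\yo(E)\otimes\yo(E)$ is ``a graph structure on (a quotient of) the four vertex-pairs,'' which is exactly what has to be proved and is where most of the work lies. The paper first bounds the vertex count by four using closedness (apply $I_1\otimes-$ to the pushout $I_1\cup_{I_0\sqcup I_0}I_1\cong I_1$ to get an epimorphism $I_1\sqcup I_1\to I_1\otimes I_1$; \cref{at most 4}), and then must separately rule out both \emph{identifications} among the four canonical vertices and the presence of \emph{extra, unlabelled} vertices not hit by any $F_\otimes(a,b)\colon I_0\to F_\otimes(E,E)$. The latter is the hardest part (\cref{no double labels}): it is settled not by associativity or by the existence of a right adjoint in the abstract, but by explicitly computing the colimits for $I_0\otimes I_1$ and $I_0\otimes I_3$ and showing unitality fails. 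Your plan does not anticipate unlabelled vertices at all. Finally, a smaller point: the elimination of the ``one diagonal but not the other'' intermediate on four vertices needs no appeal to closedness --- it follows from functoriality alone, since $(\sigma,\id)$ must be a graph map carrying one diagonal edge to the other, and $C_4\subseteq F_\otimes(E,E)$ is forced by $(a,\id)\colon(V,E)\to(E,E)$ being sent to a graph map. So the leverage you expect from the right adjoint in the last step is actually supplied earlier (unit determination and the vertex bound), and the final step is purely combinatorial.
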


Throughout the paper, we assume familiarity with category theory at the level of an introductory text, e.g., \cite{riehl:context} or \cite{maclane:cwm}.
We recall all relevant graph-theoretic notions.

This paper is organized as follows.
In \cref{sec:monoidal}, we review the background on monoidal categories and the Day convolution product, which is a technique of upgrading a promonoidal structure on a small category to its presheaf category via a left Kan extension (cf.~\cite{day,day:reflection}).
In \cref{sec:graphs}, we introduce the category of graphs and graph maps.
Our proof starts with a preliminary analysis of left Kan extensions on the relevant category of graphs in \cref{sec:Kan ext}.
Finally, in \cref{sec:main-thm}, we put all the pieces together and prove the classification of closed symmetric monoidal structures on the category of graphs.

\textbf{Acknowledgement.}
We would like to thank Daniel Carranza for many helpful conversations and insights regarding this project.

This material is based upon work supported by the National Science Foundation under Grant No.~DMS-1928930 while the first author participated in a program hosted by the Mathematical Sciences Research Institute in Berkeley, California, during the 2022--23 academic year.

The work of the second author was supported through an NSERC Undergraduate Student Research Award in the Summer of 2022.
\newcommand{\singl}{\mathbf{1}}
\section{Monoidal categories} \label{sec:monoidal}
As mentioned above, our goal is to analyze, through the lens of category theory, different products of graphs.
Monoidal structures are a natural way of doing so.
In brief, a monoidal structure on a category $\cat{C}$ is a bifunctor, usually denoted with the symbol $\otimes$, that is unital and associative (up to a natural isomorphism).
Such products can then satisfy additional properties, like symmetry or closure.

In this section, we begin by reviewing a basic theory of monoidal structures and develop some preliminary results allowing us to classify them on categories of interest.

\begin{definition} \label{def:monoidal category}
A \emph{monoidal category} consists of a category $\cat{C}$ together with:
\begin{itemize}
    \item a functor $\otimes \colon  \cat{C} \times \cat{C} \to \cat{C}$, referred to as the \emph{tensor product};
    \item an object $I \in \cat{C}$, called the \emph{unit};
    \item three natural isomorphisms $\alpha$, $\lambda$, and $\rho$ with components:
    \begin{itemize}
        \item $\alpha_{a,b,c} \colon  a \otimes (b \otimes c) \cong (a \otimes b) \otimes c$, for all $a$, $b$, $c$,
        \item $\lambda_a \colon  I \otimes a \cong a$ for all $a$,
        \item $\rho_a \colon  a \otimes I \cong a$, for all $a$,
    \end{itemize}
\end{itemize}
subject to the axioms:
\begin{enumerate}
    \item the diagram
\begin{center}
\begin{tikzpicture}
\node (P0) at (90:2.8cm) {$a\otimes \bigl( b\otimes (c\otimes d) \bigr) $};
\node (P1) at (90+72:2.5cm) {$a\otimes \bigl((b\otimes c)\otimes d\bigr)$} ;
\node (P2) at (90+2*72:2.5cm) {$\mathllap{\bigl(a\otimes (b\otimes c)\bigr)}\otimes d$};
\node (P3) at (90+3*72:2.5cm) {$\bigl((a\otimes b)\mathrlap{\otimes c\bigr)\otimes d}$};
\node (P4) at (90+4*72:2.5cm) {$(a\otimes b)\otimes (c\otimes d)$};
\draw
(P0) edge[->,>=angle 90] node[left] {$\id \otimes \alpha$} (P1)
(P1) edge[->,>=angle 90] node[left] {$\alpha$} (P2)
(P2) edge[->,>=angle 90] node[above] {$\alpha \otimes \id$} (P3)
(P4) edge[->,>=angle 90] node[right] {$\alpha$} (P3)
(P0) edge[->,>=angle 90] node[right] {$\alpha$} (P4);
\end{tikzpicture}
\end{center}
commutes for all objects $a,b,c,d \in \cat{C}$.
\item the diagram
\begin{center}
\begin{tikzcd}
a \otimes (1 \otimes b) \arrow[rr, "{\alpha_{a,1,b}}"] \arrow[rd, "\rho_x \otimes 1_y"'] &             & (a \otimes 1) \otimes b \arrow[ld, "1_a \otimes \lambda_y"] \\
 & a \otimes b &               
\end{tikzcd}
\end{center}
commutes for all objects $a, b \in \cat{C}$.
\end{enumerate}
\end{definition}

Examples of monoidal categories include any category with finite products (or finite coproducts).
Note that such a product need not be commutative. 
For instance, for a given category $\cat{C}$, the category of endofunctors on $\cat{C}$ is a monoidal category with the tensor product given by composition of functors.
    
If we wish to further restrict our study to only commutative products, we can impose extra conditions, which motivates the following definition.

\begin{definition} \label{def:symmetric monoidal}
A \emph{symmetric monoidal category} is a monoidal category $(\cat{C}, \otimes , I)$ such that for every pair of objects $a,b$, there is a natural isomorphism $s_{a,b} \colon  a\otimes b \to b \otimes a$, such that $s_{a,b} \circ s_{b,a} = 1_{b \otimes a}$, and the diagram
\begin{center}
\begin{tikzcd}
(a \otimes b) \otimes c \arrow[r, "{\alpha_{a,b,c}}"] \arrow[d, "{s_{a,b\otimes 1_c}}"] & a \otimes (b \otimes c) \arrow[r, "{s_{a,b \otimes c}}"]   & (b \otimes c) \otimes a \arrow[d, "{\alpha_{b,c,a}}"] \\
(b \otimes a) \otimes c \arrow[r, "{\alpha_{b,x,c}}"]                                   & b \otimes (a \otimes c) \arrow[r, "{1_b \otimes s_{a,c}}"] & b \otimes (c \otimes a)                              
\end{tikzcd}
\end{center}
commutes for all objects $a$, $b$, and $c$. 
\end{definition}

\begin{example}
    The category $\textsf{Ab}$ of abelian groups, along with the categorical product, is symmetric monoidal with the trivial group being the unit. 
    We can also equip $\Ab$ with the tensor product to form a symmetric monoidal structure. 
    In this case, the unit is $\mathbb{Z}$.
\end{example}

For a given monoidal category $(\cat{C}, \otimes, I)$, and object $X \in \cat{C}$, one might ask whether the functor $X \otimes - \colon \cat{C} \to \cat{C}$ admits a right adjoint.
Such a right adjoint would then act as the object of morphisms, e.g., in the case of the category of sets with the cartesian product, the right adjoint gives exactly the set of functions from $X$ to another object.
This leads to the final property of monoidal structures we shall discuss.

\begin{definition} \label{def:closed monoidal}
A symmetric monoidal category is \emph{closed} if for every object $a$ in $\cat{C}$, the functor $a \otimes - \from \cat{C} \to \cat{C}$ has a right adjoint $\textsf{hom}(a,-) \from \cat{C}\to \cat{C}$.
\end{definition}

\begin{example} \label{closed symmetric} 
The following are closed symmetric monoidal categories:
\begin{itemize}
    \item The category $\textsf{Set}$ of sets, equipped with the cartesian product. The unit is the singleton set, and given a set $X$, the right adjoint is given by $\hom(X,Y) = Y^X$.

    \item The category $\Ab$, of abelian groups, equipped with the tensor product.
    For a given group $G$, the right adjoint is given by the set of group homomorphisms $\hom(G,H)$ with the group structure defined pointwise.

    \item The categorical product on $\Ab$ is not closed.
    If it were closed, then since there is an infinite number of group homomorphisms $f \colon \mathbb{Z} \times \mathbbm{1} \to \mathbb{Z}$, we should expect an infinite number of homomorphisms $f \colon \mathbbm{1} \to \mathbb{Z}^\mathbb{Z}$. Since this is clearly not the case, the categorical product cannot be closed.

    \item We will also see in \cref{sec:graphs}, that two products of graphs, the box and categorical products, are closed symmetric monoidal (see \cref{def:graph products}).

\end{itemize}
\end{example}

We now turn our attention toward classifying monoidal structures on categories of interest.
We begin by establishing restrictions on what the unit of such a monoidal structure might be.
To do so, we work in the generality of well-pointed categories whose definition we recall.

\begin{definition} \label{def:well pointed}
    A category $\cat{C}$ with a terminal object $1$ is \textit{well pointed} if for every pair of morphisms $f,g \colon A \to B$ such that $f \neq g$, there exists a morphism $p \colon 1 \to A$ such that $f \circ p \neq g \circ p$.
\end{definition}

In other words, equality of morphisms in a well-pointed category is detected on the global sections of the domain.
Examples of well-pointed categories include the category of sets and that of topological spaces, but not the category of groups.

\begin{proposition} \label{prop:monoidal_unit}
  Let $\cat{C}$ be a well-pointed category with a monoidal structure $(\cat{C}, \otimes, I)$, and let $1$ be the terminal object of $\cat{C}$.
  Then the canonical map $I \to 1$ is a monomorphism.
\end{proposition}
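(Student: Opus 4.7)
The strategy is to first use well-pointedness to reduce the claim to showing that $I$ admits at most one global section, and then to derive the latter by exploiting the interplay between the monoidal unitors and the terminality of $1$.

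For the reduction: the canonical map $!_I \colon I \to 1$ is a monomorphism if and only if, for every object $A$ and every pair of morphisms $f, g \colon A \to I$, one has $f = g$ (since the post-compositions $!_I \circ f$ and $!_I \circ g$ are automatically equal, both being the unique map $A \to 1$). By well-pointedness, any failure of $f = g$ would be witnessed by some $p \colon 1 \to A$ with $f \circ p \neq g \circ p$, producing two distinct global sections of $I$. Thus it suffices to show that there is at most one morphism $1 \to I$.

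For this last step, given $f, g \colon 1 \to I$, I would form $f \otimes g \colon 1 \otimes 1 \to I \otimes I$ and post-compose separately with the unitors $\lambda_I$ and $\rho_I$. Decomposing $f \otimes g = (\id[I] \otimes g) \circ (f \otimes \id[1])$ and applying naturality of $\lambda$ yields
\[
\lambda_I \circ (f \otimes g) \;=\; g \circ \lambda_1 \circ (f \otimes \id[1]) \;=\; g \circ h,
\]
where $h \colon 1 \otimes 1 \to 1$ is the unique map to the terminal object. An analogous computation with the decomposition $f \otimes g = (f \otimes \id[I]) \circ (\id[1] \otimes g)$ and naturality of $\rho$ gives $\rho_I \circ (f \otimes g) = f \circ h$. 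Invoking the classical coherence identity $\lambda_I = \rho_I$, one obtains $f \circ h = g \circ h$. Finally, the map $s \coloneqq (\id[1] \otimes !_I) \circ \rho_1^{-1} \colon 1 \to 1 \otimes I \to 1 \otimes 1$ is a section of $h$ (since $h \circ s \colon 1 \to 1$ is forced to equal $\id[1]$ by terminality), so $h$ is a split epimorphism, and post-composing $f \circ h = g \circ h$ with $s$ yields $f = g$.

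The main subtlety is the coherence identity $\lambda_I = \rho_I$: although classical (see, e.g., \cite{maclane:cwm}), it is not recorded in the excerpt and its verification requires a short diagram chase using the triangle axiom together with functoriality and naturality of the unitors.
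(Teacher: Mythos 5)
Your proof is correct and follows essentially the same route as the paper's: reduce via well-pointedness to showing $I$ has at most one global section, use functoriality of $\otimes$ to identify the two composites $1 \otimes 1 \to I \otimes I$, transport along the unitors to get $f \circ h = g \circ h$, and cancel $h$ using the section $1 \iso 1 \otimes I \to 1 \otimes 1$. The only difference is one of presentation: you make explicit the coherence identity $\lambda_I = \rho_I$ and the naturality squares that the paper compresses into the phrase ``the square above simplifies to,'' which is a reasonable thing to flag.
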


\begin{proof}
  Since $\cat{C}$ is well-pointed, it suffices to show that for any pair of maps $x, y \from 1 \to I$, we have $x=y$.
  Consider the following square
      \[ \begin{tikzcd}[column sep = large]
        1 \otimes 1 
            \arrow[r,"x \otimes \id"] 
            \arrow[d,swap,"\id \otimes y"] & 
        I \otimes 1
            \arrow[d,"\id \otimes y"] \\
        1 \otimes I
            \arrow[r,"x \otimes \id"] & 
        I \otimes I
    \end{tikzcd} \]
  which commutes by functoriality of the tensor product.
  Using the fact that $I$ is the unit of the monoidal product, we observe that the square above simplifies to
    \[ \begin{tikzcd}[column sep = large]
        1 \otimes 1 
            \arrow[r,"!"] 
            \arrow[d,swap,"!"] & 
        1
            \arrow[d,"y"] \\
        1
            \arrow[r,"x"] & 
        I
    \end{tikzcd} \]
  but since there is a map $1 \iso 1 \otimes I \to 1 \otimes 1$, this square commutes exactly when $x=y$.
\end{proof}

As described above, our goal is to classify closed symmetric monoidal structures on the category of graphs, which is a reflective subcategory of a presheaf category, i.e., a category of the form $\Set^{C^{\op}}$.
For notational convenience, we will write $\hat{C}$ for $\Set^{C^{\op}}$ throughout.
Closed symmetric monoidal structures on presheaf categories are known to arise via the \emph{Day convolution product}.
The Day convolution of a promonoidal functor $F \from C \times C \to \hat{C}$ is obtained by taking the left Kan extension of $F$ along $\yo \times \yo$: 
\begin{center}
\begin{tikzcd}
C \times C \arrow[d, "\yo \times \yo"'] \arrow[rr, "F"]                          &  & \hat{C} \\
\hat{C} \times \hat{C} \arrow[rru, "\lan_{\yo \times \yo }F"', dotted] &  &             
\end{tikzcd}
\end{center}
where we write $\yo \from C \to \hat{C}$ for the Yoneda embedding.
This leads to the following theorem:
\begin{theorem}[\cite{day}]\label{TH:Day convolution}
    Let $C$ be a small category. Then there exists a bijection between the set of closed symmetric monoidal structures on $\hat{C}$ and promonoidal functors $F \colon C \times C \to \hat{C}$.
\end{theorem}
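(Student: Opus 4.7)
The plan is to set up the correspondence in both directions and then argue that left Kan extension along $\yo \times \yo$ and restriction along $\yo \times \yo$ are mutually inverse on the relevant data. The crucial input is that in any closed symmetric monoidal structure on $\hat{C}$, the tensor product $\otimes \from \hat{C} \times \hat{C} \to \hat{C}$ preserves colimits in each variable separately, since by closure each functor $X \otimes \uvar$ has a right adjoint $\ccat{hom}(X,\uvar)$ and each $\uvar \otimes Y$ is naturally isomorphic to $Y \otimes \uvar$ via the symmetry.

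Starting from a closed symmetric monoidal structure $(\otimes, I)$ on $\hat{C}$, I would restrict to obtain $F \from C \times C \to \hat{C}$ defined by $F(c,c') = \yo(c) \otimes \yo(c')$. Since every presheaf is canonically a colimit of representables and $\otimes$ is cocontinuous in each variable, for any $X, Y \in \hat{C}$ the value $X \otimes Y$ is canonically isomorphic to $(\Lan_{\yo \times \yo} F)(X,Y)$; in other words, the whole tensor product is recovered as the left Kan extension of $F$. The associator, unitors, and symmetry of $\otimes$ restrict to natural isomorphisms involving $F$ and $\yo(I)$ that encode a promonoidal structure on $C$ (with the promonoidal multiplication being the functor $C \times C \times C^{\op} \to \Set$ corepresented componentwise by $F$, and promonoidal unit $\yo(I)$, or more generally $J(c) = \hat{C}(\yo(c), I)$), and the pentagon, triangle, and hexagon axioms for $(\otimes, I)$ translate directly into the corresponding coherence axioms for $F$ as a promonoidal structure.

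Conversely, starting from a promonoidal $F \from C \times C \to \hat{C}$, I would define $\otimes = \Lan_{\yo \times \yo} F$ by the coend formula
\[
    X \otimes Y \;=\; \int^{(c,c') \in C \times C} X(c) \times Y(c') \times F(c,c'),
\]
and define the unit $I$ via the promonoidal unit. Checking that this is a symmetric monoidal structure amounts to transporting the associativity, unit, and symmetry isomorphisms of $F$ through the coend formula; cocontinuity in each variable is immediate from the coend construction, giving closure by the adjoint functor theorem (with $\ccat{hom}(X,Z)(c) = \hat{C}(X \otimes \yo(c), Z)$, or equivalently $\ccat{hom}(X,Z)(c) = \hat{C}(X, Z)$ assembled via the coend). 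Finally, I would verify that the two constructions are mutually inverse: restricting the Kan-extended $\otimes$ back along $\yo \times \yo$ recovers $F$ up to canonical isomorphism because representables corepresent evaluation in a coend, and extending the restriction of a cocontinuous tensor product recovers the original $\otimes$ by the density theorem.

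The main obstacle is bookkeeping rather than conceptual: verifying that each monoidal coherence diagram for $(\otimes, I)$ on $\hat{C}$ translates, via the universal property of the coend, into the corresponding promonoidal coherence on $C$, and vice versa. This requires carefully naming the associator and unit comparison isomorphisms on both sides and checking that the bijection of structures is compatible with morphisms of (pro)monoidal data. Everything else — cocontinuity, the Yoneda-based reconstruction, and the existence of internal homs — is a formal consequence of the adjoint functor theorem together with the density of representables.
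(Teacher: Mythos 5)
The paper does not prove this statement: it is quoted from Day's work with a citation, so there is no in-paper argument to compare against. Your outline is, in substance, the standard proof of Day's theorem, and the skeleton is sound: cocontinuity of $\otimes$ in each variable from closure plus symmetry, reconstruction of $\otimes$ as $\Lan_{\yo\times\yo}$ of its restriction to representables by density, the coend formula
\[
(X \otimes Y) \;=\; \int^{(c,c')} X(c) \times Y(c') \times F(c,c')
\]
in the reverse direction, the internal hom $\mathsf{hom}(X,Z)(c) = \hat{C}(X \otimes \yo(c), Z)$, and mutual inverseness via the density theorem. This matches how the paper later \emph{uses} the theorem (e.g.\ in \cref{Prop: monoidal as kan extensions} and \cref{ex: monoidal in set}), where only the ``closed symmetric monoidal $\Rightarrow$ left Kan extension of the restriction along $\yo \times \yo$'' direction is exploited.

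Two small cautions. First, your parenthetical ``or equivalently $\mathsf{hom}(X,Z)(c) = \hat{C}(X,Z)$ assembled via the coend'' is not right as written: that expression does not depend on $c$; only the formula $\hat{C}(X \otimes \yo(c), Z)$ is correct. Second, the word ``bijection'' in the statement (and hence in your write-up) should really be a bijection up to isomorphism, or better an equivalence between the category of closed symmetric monoidal structures and that of symmetric promonoidal structures (which comprise not just the functor $F$ but also the unit presheaf and the coherence isomorphisms); your sketch implicitly acknowledges this by carrying the unit $J$ and the coherence data along, but the final ``bijection of sets'' framing glosses over it, as does the paper's own statement.
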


It will not be important to us what a promonoidal structure is, but a key consequence of this theorem is that to characterize closed symmetric monoidal structures on a category $\hat{C}$ we only need to consider functors $F \colon C \times C \to \hat{C}$.
To present a sample application, we show (a well known fact) that the category of sets carries a unique closed symmetric monoidal structure.

\begin{proposition} \label{ex: monoidal in set}
    The only closed symmetric monoidal structure in the category $\Set$ is the cartesian product.
\end{proposition}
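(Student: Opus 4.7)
The plan is to pin down the unit first and then exploit closedness to identify the tensor with the cartesian product.

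First I would apply \cref{prop:monoidal_unit}. The category $\Set$ is well-pointed (two maps of sets agreeing on all elements $1 \to X$ are equal), so the unit $I$ of any monoidal structure admits a mono to the terminal object $1 = \{*\}$. In $\Set$ the only subobjects of a singleton are $\emptyset$ and $\{*\}$ itself, so $I$ is one of these two.

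Next I would rule out $I = \emptyset$ using closedness. Since the structure is closed, the functor $X \otimes -$ is a left adjoint for every set $X$, hence preserves colimits. Applied to the initial (empty) colimit, this gives $X \otimes \emptyset \cong \emptyset$. If $I = \emptyset$, the unit axiom forces $X \otimes \emptyset \cong X$, so every set would be empty, which is absurd. Hence $I = \{*\}$.

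With the unit identified, the heart of the proof is a coproduct decomposition. Every set $S$ is canonically the coproduct $\coprod_{s \in S} \{*\}$. Using that $- \otimes T$ also preserves colimits (closedness combined with the symmetry $s_{S,T}$), and that $\{*\} = I$ is the unit, I would compute
\[
  S \otimes T \;\cong\; \Bigl(\coprod_{s \in S} \{*\}\Bigr) \otimes T \;\cong\; \coprod_{s \in S} \bigl(\{*\} \otimes T\bigr) \;\cong\; \coprod_{s \in S} T \;\cong\; S \times T,
\]
and this chain of isomorphisms is natural in both $S$ and $T$. This exhibits $\otimes$ as the cartesian product on objects and morphisms.

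The calculation above is essentially the only obstacle, and it is light; the real conceptual work is packaged into \cref{prop:monoidal_unit} and the fact that closed tensors preserve colimits. Once $\otimes$ is shown to be the cartesian product as a bifunctor with unit $\{*\}$, the structure maps $\alpha$, $\lambda$, $\rho$, and $s$ are uniquely determined by the universal property of products, so no additional checks are needed to conclude that $(\Set, \otimes, I)$ is the cartesian closed symmetric monoidal structure.
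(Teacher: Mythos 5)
Your argument is correct, but it takes a genuinely different route from the paper's. Both proofs begin identically: \cref{prop:monoidal_unit} plus well-pointedness restricts the unit to $\varnothing$ or $\{*\}$, and cocontinuity of $X \otimes -$ (from closedness) kills the $\varnothing$ case. After that you diverge. The paper deliberately runs the identification of $\otimes$ through the Day convolution machinery: it views $\Set$ as $\Set^{[0]^\op}$, observes via \cref{TH:Day convolution} (and in effect \cref{Prop: monoidal as kan extensions}) that $\otimes$ must be the left Kan extension along $\yo \times \yo$ of its restriction $F$ to $[0]\times[0]$, pins down $F(0,0)=1$ from unitality, and reads off the cartesian product from the pointwise colimit formula. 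You instead decompose $S$ as the canonical colimit of singletons and push that through the cocontinuous functor $-\otimes T$ to get $S\otimes T \cong \coprod_{s\in S} T \cong S\times T$. These are really the same computation --- your coproduct decomposition is exactly the pointwise Kan extension colimit over $\yo\downarrow S$, made concrete --- but your version is more elementary and avoids citing the Day classification, while the paper's version is chosen precisely to rehearse the Kan-extension technique it will need for $\Graph$ in \cref{sec:Kan ext,sec:main-thm}. Two small cautions: naturality in $S$ of your chain of isomorphisms is cleanest if you phrase the decomposition as the colimit over the category of elements rather than an unstructured coproduct indexed by $S$; and your closing remark that the coherence data $\alpha,\lambda,\rho,s$ are \emph{uniquely} determined by the universal property of the product is not automatic (an exotic monoidal structure could a priori share the underlying bifunctor but carry different coherence isomorphisms), though the paper's own proof identifies only the bifunctor as well, so you are not below its standard of rigor.
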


\begin{proof}
    We know from \cref{closed symmetric} that the cartesian product defines a closed symmetric monoidal structure on $\Set$. 
    It remains to verify that it is the only such structure.
    First, we use \cref{prop:monoidal_unit} to show that the singleton set, call it $1$, is the unit. 
    We know in $\Set$ that $1$ is the terminal object, with subobjects $1$ and $\varnothing$. 
    Now, suppose $(\Set, \otimes, \varnothing)$ is a closed symmetric monoidal structure.
    Since $\varnothing$ is the initial object, it must be preserved by $-\otimes X$ for any set $X$. 
    But since $\varnothing$ is also the unit, we would have 
    $\varnothing \iso \varnothing \otimes X \iso X$, a contradiction. 

   Now note that $\Set \iso \Set^{[0]^\op}$, where $[0]$ is the terminal category (i.e., the category with one object and only the identity morphism). 
    We thus must consider functors $F \from [0] \times [0] \to \Set$ whose left Kan extension along $\yo \times \yo$
is a closed symmetric monoidal structure.

Writing $0$ for the unique object in $[0]$, we know that $(\yo \times \yo)(0,0) = (1,1)$. 
Using closure of the monoidal structure, $\lan_{\yo \times \yo}(F)(1,1) = 1$.
By commutativity, we must have that $F(0,0) = 1$, which defines $F$. 
Using the pointwise formula for Kan extensions, we obtain that $\lan_{\yo \times \yo}(F)(X,Y)$ is indeed the cartesian product of $X$ and $Y$.
\end{proof}

Unfortunately, the category of graphs is not a presheaf category, but rather a reflective category thereof.
For that reason, we need to strengthen \cref{TH:Day convolution} to the setting of presentable categories.
Similar strengthening was considered by Day \cite{day:reflection,day:monoidal}.

For the remainder of this section, we fix a presentable category $\cat{C}$, i.e., a category along with a full embedding $i \colon  \cat{C} \hookrightarrow \hat{C}$ that admits a left adjoint, called the \emph{reflector}, $L \colon  \hat{C} \to \cat{C}$.
In particular, $Li \iso \id$. 

We further require that $\yo$ factors as
\begin{center}
\begin{tikzcd}
C \arrow[rd, "\hat{\yo}"'] \arrow[rr, "\yo"] &                          & \hat{C} \\
  & \cat{C} \arrow[ru, "i"',hook] &        
\end{tikzcd}        
\end{center}

Now, before stating the resulting proposition from this setup, we first must state the following lemma:

\begin{lemma} \label{lemma:comma_cat_isos} 
   Let $\cat{C}$ be a reflective category as above, and fix objects $X$ and $Y$ in $\cat{C}$. Then there are isomorphisms:
    \begin{equation}
    \tag{1} \hat{\yo} \downarrow X \times \hat{\yo} \downarrow Y  \iso \hat{\yo} \times \hat{\yo} \downarrow (X,Y) 
    \end{equation}
    Defined by sending objects $\Bigl(\bigl(a \in C, f \colon \hat{\yo}(a) \to X\bigr), \bigl(b \in C, g \colon \hat{\yo}(b) \to Y\bigr)\Bigr)$ in $\hat{\yo} \downarrow X \times \hat{\yo} \downarrow Y$ to $\Bigl((a,b), (f,g) \colon \bigl(\hat{\yo}(a), 
    \hat{\yo}(b)\bigr) \to (X,Y)\Bigr)$ in $\hat{\yo} \times \hat{\yo} \downarrow (X,Y)$.

    \begin{equation}
    \tag{2} \hat{\yo} \times \hat{\yo} \downarrow (X,Y) \iso \yo \times \yo \downarrow (X,Y) 
    \end{equation}
    Defined by sending $\Bigl((a,b), (f,g) \colon \bigl(\hat{\yo}(a), \hat{\yo}(b)\bigr) \to (X,Y)\Bigr)$  to $\Bigl((a,b), (f,g) \colon \bigl(\yo(a), \yo(b)\bigr) \to (X,Y)\Bigr)$.

    \begin{equation}
    \tag{3} \yo \times \yo \downarrow (X,Y) \iso \yo \downarrow X \times \yo \downarrow Y 
    \end{equation}
    Defined by sending $\Bigl((a,b), (f,g) \colon \bigl(\yo(a), \yo(b)\bigr) \to (X,Y)\Bigr)$ to $\Bigl(\bigl(a,f \colon \yo(a) \to X\bigr),(b,g \colon \yo(b) \to Y)\Bigr)$.
    
    Note that in (2) and (3) we treat $X$ and $Y$ as elements of $\hat{C}$, and omit the inclusion $i$ in the equations. \qed
\end{lemma}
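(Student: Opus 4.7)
The plan is to prove all three isomorphisms by exhibiting the obvious bijections on objects, extending them to morphisms, and checking that they are mutually inverse functors. Since the lemma statement already specifies the action on objects in each case, the work reduces to (a) defining the corresponding action on morphisms, and (b) verifying that the stated maps genuinely land in the claimed categories, which for (2) is where the only non-formal ingredient appears.

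For (1) and (3), these are instances of the general categorical fact that a comma category over a product factors as a product of comma categories. Concretely, for (1), a morphism in $\hat{\yo} \downarrow X \times \hat{\yo} \downarrow Y$ from $((a,f),(b,g))$ to $((a',f'),(b',g'))$ is a pair of morphisms $u \colon a \to a'$ and $v \colon b \to b'$ in $C$ such that $f' \circ \hat{\yo}(u) = f$ and $g' \circ \hat{\yo}(v) = g$. A morphism in $\hat{\yo} \times \hat{\yo} \downarrow (X,Y)$ from $((a,b),(f,g))$ to $((a',b'),(f',g'))$ is a morphism $(u,v) \colon (a,b) \to (a',b')$ in $C \times C$ with $(f',g') \circ (\hat{\yo} \times \hat{\yo})(u,v) = (f,g)$, i.e., exactly the same data. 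So the described object-level bijection extends to morphisms identically, and it is immediate that composition and identities are preserved. The same argument, with $\yo$ in place of $\hat{\yo}$, handles (3).

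For (2), the key input is that the inclusion $i \colon \cat{C} \hookrightarrow \hat{C}$ is full and faithful (as a full embedding). Therefore, for each $a \in C$ and each $X \in \cat{C}$, the map
\[
i \colon \cat{C}(\hat{\yo}(a), X) \longrightarrow \hat{C}(i\hat{\yo}(a), iX) = \hat{C}(\yo(a), iX)
\]
is a bijection. Applying this to both coordinates gives the claimed object-level bijection between $\hat{\yo} \times \hat{\yo} \downarrow (X,Y)$ and $\yo \times \yo \downarrow (X,Y)$. For morphisms, a map on either side is just a morphism $(u,v)$ in $C \times C$ compatible with the structure maps; compatibility of $(f,g)$ with $(u,v)$ in $\cat{C} \times \cat{C}$ is equivalent, by the fullness and faithfulness of $i \times i$, to compatibility of $i(f,g)$ with $(u,v)$ in $\hat{C} \times \hat{C}$. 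Thus the bijections on hom-sets assemble into a functor, and its inverse is constructed analogously.

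I do not expect any genuine obstacle here; the proof is essentially a diagram chase through definitions, with the only substantive point being fullness and faithfulness of $i$ in step (2). Accordingly, I would present the proof compactly, remarking that (1) and (3) follow from the universal property of the product of categories and that (2) is the content of $i$ being a full embedding, and then leave the routine functoriality checks to the reader.
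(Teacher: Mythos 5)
Your proof is correct, and it supplies exactly the justification the paper leaves implicit by marking the lemma with a tombstone and no proof: (1) and (3) are the standard product decomposition of comma categories, and (2) is precisely the statement that $i$ is fully faithful applied to $\yo = i\hat{\yo}$. Nothing further is needed.
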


\begin{proposition}\label{Prop: monoidal as kan extensions}
    Let $C$, $\cat{C}$, and $\hat{C}$ be as above.
    Then, if $\otimes \colon  \cat{C} \times \cat{C} \to \cat{C}$ defines a closed symmetric monoidal structure on $\cat{C}$, $\otimes$ is equal to the left Kan extension of $\otimes \circ(\hat{\yo} \times \hat{\yo})$ along  $\hat{\yo} \times \hat{\yo}$:
\begin{center}
\begin{tikzcd}
C \times C \arrow[d, "\hat{\yo} \times \hat{\yo}"'] \arrow[rr, "\otimes \circ(\hat{\yo} \times \hat{\yo})"] &  & \cat{C} \\
\cat{C} \times \cat{C} \arrow[rru, dotted]                                                                  &  &        
\end{tikzcd}
\end{center}
    i.e. $\otimes \iso \lan_{\hat{\yo} \times \hat{\yo}}\bigl(\otimes \circ (\hat{\yo} \times \hat{\yo})\bigr)$.
\end{proposition}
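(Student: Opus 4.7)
The strategy is to apply the pointwise formula for left Kan extensions and identify the resulting colimit with $X \otimes Y$ for each $(X, Y) \in \cat{C} \times \cat{C}$. Writing out the pointwise formula:
\[
\lan_{\hat{\yo} \times \hat{\yo}}(\otimes \circ (\hat{\yo} \times \hat{\yo}))(X, Y) \iso \colim_{\hat{\yo} \times \hat{\yo} \downarrow (X, Y)} \hat{\yo}(a) \otimes \hat{\yo}(b),
\]
so the task reduces to producing a natural isomorphism between the right-hand colimit and $X \otimes Y$.

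The key input is the standard density statement in reflective form: for every $X \in \cat{C}$, the canonical cocone exhibits $X$ as the colimit $X \iso \colim_{\hat{\yo} \downarrow X} \hat{\yo}$. This follows because representables are dense in $\hat{C}$ (so $iX \iso \colim_{\yo \downarrow iX} \yo$), the reflector $L$ preserves colimits as a left adjoint and sends $\yo$ to $\hat{\yo}$ (since $Li \iso \id$), and the adjunction $L \dashv i$ gives a canonical identification of slice categories $\yo \downarrow iX \iso \hat{\yo} \downarrow X$. Closure of the monoidal structure, meanwhile, ensures that $- \otimes Y$ and $X \otimes -$ each preserve colimits. Consequently
\[
X \otimes Y \iso \Bigl( \colim_{\hat{\yo} \downarrow X} \hat{\yo}(a) \Bigr) \otimes \Bigl( \colim_{\hat{\yo} \downarrow Y} \hat{\yo}(b) \Bigr) \iso \colim_{(\hat{\yo} \downarrow X) \times (\hat{\yo} \downarrow Y)} \hat{\yo}(a) \otimes \hat{\yo}(b),
\]
and invoking part (1) of \cref{lemma:comma_cat_isos} to re-index the colimit over $\hat{\yo} \times \hat{\yo} \downarrow (X, Y)$ matches the pointwise formula above, yielding the desired isomorphism $X \otimes Y \iso \lan_{\hat{\yo} \times \hat{\yo}}(\otimes \circ (\hat{\yo} \times \hat{\yo}))(X, Y)$.

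The main obstacle I anticipate is bookkeeping rather than conceptual: I need to verify that the chain of isomorphisms is natural in $(X, Y) \in \cat{C} \times \cat{C}$, so as to conclude an isomorphism of functors rather than merely a pointwise one. This requires checking that the density isomorphism $X \iso \colim_{\hat{\yo} \downarrow X} \hat{\yo}$ is natural in $X$, which in turn reduces to the functoriality of the reflector $L$ applied to the canonical colimit-of-representables decomposition in $\hat{C}$, and then confirming that the colimit interchange step and the re-indexing along \cref{lemma:comma_cat_isos}(1) are compatible with the structure maps of the cocones.
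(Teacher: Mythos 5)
Your proof is correct, and it takes a genuinely more direct route than the paper's. The paper introduces an auxiliary functor $\overline{\otimes} = \lan_{\yo \times \yo}(\otimes \circ (\hat{\yo} \times \hat{\yo})) \from \hat{C} \times \hat{C} \to \cat{C}$, proves that it preserves colimits in each variable (via the pointwise formula, part (3) of \cref{lemma:comma_cat_isos}, and commutation of colimits), identifies it with $\otimes \circ (L \times L)$ by the universal property of the free cocompletion, and then restricts along $i \times i$ using part (2) of the lemma. You stay entirely inside $\cat{C}$: you establish density of $\hat{\yo}$, i.e.\ $X \iso \colim_{\hat{\yo} \downarrow X} \hat{\yo}$ (correctly obtained by applying the colimit-preserving reflector $L$ to the canonical presentation of $iX$ as a colimit of representables), commute $\otimes$ past the two colimits using cocontinuity in each variable, and re-index via part (1) of the lemma to match the pointwise Kan extension formula. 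The ingredients are the same---density of representables, cocontinuity of $\otimes$ in each variable, and a comma-category re-indexing---but your version avoids the detour through $\hat{C}$ and the separate argument that $\overline{\otimes}$ is cocontinuous. Two small points worth making explicit: cocontinuity of $- \otimes Y$ uses symmetry together with closedness (closedness alone only gives it for $X \otimes -$), and the naturality bookkeeping you flag does require observing that the density cocone is the tautological one (its component at $(a,f)$ is $f$ itself), so that your composite isomorphism coincides with the canonical comparison map out of the Kan extension; both checks are routine.
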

\begin{proof}
For simplicity, we write $\tilde{\otimes}$ for $\lan_{\hat{\yo} \times \hat{\yo}}\bigl(\otimes \circ (\hat{\yo} \times \hat{\yo})\bigr)$.
Our goal is to show that $\otimes \iso \tilde{\otimes}$. 
To do this, we define another product $\overline{\otimes} \colon \hat{C} \times \hat{C} \to \cat{C}$ by $\overline{\otimes} = \lan_{\yo \times \yo}(\otimes \circ \bigl(\hat{\yo} \times \hat{\yo})\bigr)$. 
The relevant functors are displayed in the following diagram:
\begin{center}
\begin{tikzcd}
C \times C \arrow[d, "\hat{\yo} \times \hat{\yo}"'] \arrow[dd, "\yo \times \yo"', bend right, shift right=8] \arrow[rr, "\otimes \circ (\hat{\yo} \times \hat{\yo})"] &  & \cat{C} \arrow[r, "i", shift left] & \hat{C} \times \hat{C} \arrow[l, "L", shift left] \\
\cat{C} \times \cat{C} \arrow[d, "i \times i"'] \arrow[rru, "\tilde{\otimes}", dotted]                                                                                &  &                                    &                                                   \\
\hat{C} \times \hat{C} \arrow[rruu, "\overline{\otimes}"', dotted] \arrow[d, "L \times L"']                                                                           &  &                                    &                                                   \\
\cat{C} \times \cat{C} \arrow[rruuu, "\otimes"', shift right=2]                                                                                                       &  &                                    &                                                  
\end{tikzcd}
\end{center}
Our proof is divided into two parts: 
\begin{equation*}\label{eq:tilde-bar}
    \tag{i} \tilde{\otimes} \iso \overline{\otimes} \circ (i \times i) 
\end{equation*}
\begin{equation*} \label{eq:bar-nothing}
    \tag{ii} \overline{\otimes} \iso \otimes \circ (L \times L)
\end{equation*}
Once these are established, we get that: 
\begin{align*}
    \otimes &\iso \otimes \circ (L \times L) \circ (i \times i)\\
    & \iso \overline{\otimes} \circ (i \times i)\\
    & \iso \tilde{\otimes}\\
\end{align*}
as desired.

For part (\ref{eq:tilde-bar}), we fix $X, Y \in \cat{C}$.
We have that by the pointwise formula:
$$X\tilde{\otimes}Y = \Colim\bigl(\hat{\yo} \times \hat{\yo} \downarrow (X,Y) \xrightarrow{\pi} C \times C \xrightarrow{\otimes \circ (\hat{\yo} \times \hat{\yo})} \cat{C}\bigr)$$
$$i(X)\overline{\otimes}i(Y) = \Colim\bigl(\yo \times \yo \downarrow (i \times i)(X,Y) \xrightarrow{\pi} C \times C \xrightarrow{\otimes \circ (\hat{\yo} \times \hat{\yo})} \cat{C}\bigr)$$
We see from equation (2) in \cref{lemma:comma_cat_isos} that $X \tilde{\otimes} Y \iso i(X) \overline{\otimes} i(Y)$, and thus $\tilde{\otimes} \iso \overline{\otimes} \circ (i \times i)$ as desired. 

For part (\ref{eq:bar-nothing}) it suffices to show that $\overline{\otimes}$ preserves colimits in each variable.
Indeed, since $\otimes \circ (L \times L)$ and $\overline{\otimes}$ agree on representables, and both preserve colimits, we get that $\overline{\otimes} \iso \otimes \circ (L \times L)$. 

To do this, we show that $\overline{\otimes}$ is a Kan extension in each variable.
We claim that for any $X, Y$ in $\hat{C}$:

\begin{align}
    X \overline{\otimes} -  \iso \lan_\yo\bigl((X \overline{\otimes} -) \circ \hat{\yo}\bigr) \\
    - \overline{\otimes} Y  \iso \lan_\yo\bigl((- \overline{\otimes} Y) \circ \hat{\yo}\bigr) 
\end{align}
We show (1) and the proof of (2) is identical. 
Consider the pointwise formula:
\begin{align*}
    X \overline{\otimes}Y & \iso \Colim\bigl(\yo \times \yo \downarrow (X,Y) \to C \times C \xrightarrow{\otimes \circ (\hat{\yo} \times \hat{\yo})}  \cat{C}\bigr) \\
    & \iso \Colim\bigl(\yo \downarrow X \times \yo \downarrow Y \to C \times C \xrightarrow{\otimes \circ (\hat{\yo} \times \hat{\yo})}  \cat{C}\bigr) && \text{By equation (3) of \cref{lemma:comma_cat_isos}}\\
    & \iso \Colim\bigl(\yo \downarrow Y \xrightarrow{\pi} C \xrightarrow{(X\overline{\otimes}- )\circ \hat{\yo}}\cat{C}\bigr) && \text{Since colimits commute with colimits}
\end{align*}
But this is exactly the formula for $\lan_\yo\bigl((X \overline{\otimes} -)(Y) \circ \hat{\yo}\bigr)$. 
Thus $\overline{\otimes}$ is a Kan extension in each variable.
Since $\yo$ is the free cocompletion, we get that $\overline{\otimes}$ preserves colimits in each variable, completing the proof.
\end{proof}

\section{The category of graphs} \label{sec:graphs}
In this section, we introduce the category of graphs that we will be working with.
While there are many different flavors of graphs, here, we work specifically with undirected simple graphs without loops.
We introduce this category as a subcategory of a presheaf category we shall now define.

Let $\mathbb{G}$ be the category generated by the diagram
\[ \begin{tikzcd}
    V \ar[r, yshift=1ex, "s"] \ar[r, yshift=-1ex, "t"'] & \ar[l, "r" description] E \ar[loop right, "\sigma"]
\end{tikzcd} \]
subject to the identities
\[ \begin{array}{l l}
    rs = rt = \id[V] & \sigma^2 = \id[E] \\
    \sigma s = t & \sigma t = s \\
    r \sigma = r.
\end{array} \]

We call the functor category $\Set^{\mathbb{G}^\op}$ the category of \emph{marked multigraphs}.

For $X \in \Set^{\mathbb{G}^\op}$, we write $X_V$ and $X_E$ for the sets $X(V)$ and $X(E)$, respectively.
Explicitly, such a functor consists of sets $X_V$ and $X_E$ together with the following functions between them
\[ \begin{tikzcd}
    X_V \ar[r, "r" description]  &  X_E \ar[l, yshift=1ex, "s"'] \ar[l, yshift=-1ex, "t"] \ar[loop right, "\sigma"]
\end{tikzcd} \]
subject to the dual versions of identities in $\mathbb{G}$. 
The category $\Set^{\mathbb{G}^\op}$ can be viewed as the category of undirected multigraphs such that each vertex has a specified loop.
We can define the category of undirected simple graphs with no loops as a subcategory of $\Set^{\mathbb{G}^\op}$: 

\begin{definition} \label{def:graph}
    The category $\Graph$ is defined to be the full subcategory of $\Set^{\mathbb{G}^\op}$ spanned by elements $X \in \Set^{\mathbb{G}^\op}$ such that the map $(s, t) \colon X_E \to X_V \times X_V$ is a monomorphism.
\end{definition}

For $X \in \Graph$, we see that $X_V$ represents the vertex set and $X_E$ represents the edge set. 
The map $(s, t) \colon X_E \to X_V \times X_V$ assigns to each edge a source and a target vertex. The monomorphism condition ensures that for every vertex pair $(v,w)$ there exists at most one edge from $v$ to $w$. 
The map $\sigma$ ensures that whenever there is an edge from $v$ to $w$, there is also an edge from $w$ to $v$, hence making our graphs undirected. 
The map $r$ sends each vertex to a loop on that vertex, meaning every vertex has exactly one loop.\footnote{In some literature on graph theory, edges from a fixed vertex to itself satisfying $\sigma l = l$ might be called \emph{semi-edges}, while those with $\sigma l \neq l$ are called \emph{loops}.
According to that terminology, which we do not follow, our graphs would have unique semi-edges and no loops.} 
This is logically the same as treating the graphs as having no loops, however the mandatory loop ensures that a morphism in this category is precisely a graph map, as it is usually defined:

If $X, X'$ are graphs in $\Graph$, and $f \colon  X \to X'$ is a natural transformation, the naturality condition means that if there is an edge between vertices $v$ and $w$ in $X$, there must be an edge between $f(v)$ and $f(w)$ in $X'$. 
Since each vertex has a loop, this is equivalent to the condition that either $f(v)$ and $f(w)$ are different vertices connected by an edge, or they are the same vertex. 
Thus morphisms in this category are maps $f \colon  X_{V} \to  X'_{V}$, such that if $v$ is connected to $w$ in $X$, either $f(v)$ is connected to $f(w)$ or $f(v) = f(w)$.

We write $I_n$ to represent the graph in this category with $n + 1$ vertices connected in a line, as seen in Figure 1.

\begin{figure}[h] \label{I_4 ex}
  \centering
  \begin{tikzpicture}
    \foreach \i in {0,...,4} {
      \node[circle, draw] (\i) at (\i,0) {\i};
      \ifnum\i>0
        \pgfmathtruncatemacro{\j}{\i-1}
        \draw (\j) -- (\i);
      \fi
    }
  \end{tikzpicture}
  \caption{The graph $I_4$}
  \label{fig:graph}
\end{figure}
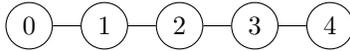

We also write $C_n$ to represent the cyclic graph on $n$ vertices, and $K_n$ to represent the complete graph on $n$ vertices.

Note that by this definition, we are in the same case as with \cref{Prop: monoidal as kan extensions}, as $\Graph$ is a reflective subcategory of $\Set^{\mathbb{G}^\op}$. 
The reflector in this case takes a marked multigraph to the graph obtained by removing instances of multiple edges between two vertices, and replacing them with only a single edge.

The setup also requires that the Yoneda embedding factors through $\Graph$. 
In this case, $\yo \colon \mathbb{G} \to \Set^{\mathbb{G}^\op}$
takes an object $X \in \mathbb{G}$ (either $V$ or $E$) and sends it to the functor $\mathbb{G}(-,X) \colon \mathbb{G}^\op \to \Set$. 

Thus the graph $\yo(V)$ has a vertex set $\mathbb{G}(V,V) = \{ \id \}$ and edge set $\mathbb{G}(E,V) = \{r\}$. 
There are precomposition maps $s^*, t^* \colon \mathbb{G}(E,V) \to \mathbb{G}(V,V)$ sending $r$ to $rs = \id$ and $rt = \id$ respectively. Thus the edge $r$ is from $\id$ to itself. Hence $\yo(V)$ is the graph with a single vertex and a unique loop, in other words, $\yo(V) = I_0$. 

The graph $\yo(E)$ has vertex set $\mathbb{G}(V,E) = \{s,t\}$ and edge set $\mathbb{G}(E,E) = \{\id,\sigma,sr,tr\}$. 
Again, we have precomposition maps $s^*,t^* \colon \mathbb{G}(E,E) \to \mathbb{G}(V,E)$. 
Here $s^*$ takes $\id$ to $s$ and $t^*$ takes $\id$ to $t$, so $\id$ is an edge from $s$ to $t$. Moreover, $s^*$ takes $\sigma$ to $\sigma s=t$ and $t^*$ takes $\sigma$ to $\sigma t = s$. Thus $\sigma$ is an edge in the opposite direction, from $t$ to $s$. 
As stated earlier, pairs of edges like this are treated as one undirected edge. The maps $s^*$ and $t^*$ both map $sr$ to $srs=srt=s$ and map $tr$ to $trs=trt=t$. Thus $sr$ is a loop on the vertex $s$ and $tr$ is a loop on the vertex $t$. In this way, $\yo(E)=I_1$.

We see that as desired, $\yo$ does factor through $\Graph$. This is summarized in the following figure and proposition.

\begin{figure}[h]
    \centering
    \begin{tikzpicture}
    \node[circle, draw] (A) at (0,0) {\(\id\)};
    \draw[->] (A) to [loop above] node {\(r\)} (A);
\end{tikzpicture}
\hspace{2cm} 
\begin{tikzpicture}
    \node[circle, draw] (s) at (0,0) {\(s\)};
    \node[circle, draw] (t) at (0,2) {\(t\)};
    
    \draw[->] (s) to[bend left] node[left] {\(\id\)} (t);
    \draw[->] (t) to[bend left] node[right] {\(\sigma\)} (s);
    \draw[->] (t) to [loop above] node {\(tr\)} (t);
    \draw[->] (s) to [loop below] node {\(sr\)} (s);
\end{tikzpicture}
    \caption{The graphs $I_0$ and $I_1$ as the image of the Yoneda embedding}
    \label{fig:yoneda of graphs}
\end{figure}

\begin{proposition}
    The Yoneda embedding $\yo \colon  \mathbb{G} \to \Set^{\mathbb{G}^\op}$ sends $V$ to $I_0$ and $E$ to $I_1$. \qed   
\end{proposition}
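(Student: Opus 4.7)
The plan is to compute the two representable presheaves $\yo(V)$ and $\yo(E)$ directly from the generators-and-relations presentation of $\mathbb{G}$, verify that each satisfies the monomorphism condition of \cref{def:graph}, and identify them with the graphs $I_0$ and $I_1$ from the description earlier in the section.

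The first step is to enumerate the four hom-sets of $\mathbb{G}$ between $V$ and $E$. The identities $rs = rt = \id[V]$ collapse every composite $V \to E \to V$, giving $\mathbb{G}(V, V) = \{\id[V]\}$, and similarly $r\sigma = r$ yields $\mathbb{G}(E, V) = \{r\}$. Using $\sigma s = t$, $\sigma t = s$, and $\sigma^2 = \id[E]$, together with the collapse of any trip through $V$, one obtains $\mathbb{G}(V, E) = \{s, t\}$. The delicate calculation is $\mathbb{G}(E, E)$, which I claim consists of exactly the four morphisms $\id[E]$, $\sigma$, $sr$, and $tr$. Closure under composition follows from short rewrites such as $(sr)(sr) = s(rs)r = sr$ and $\sigma \cdot sr = (\sigma s) r = tr$. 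Pairwise distinctness is verified by right-composing with $s$ and $t$: the four endomorphisms yield the four different pairs $(s, t)$, $(t, s)$, $(s, s)$, $(t, t)$ in $\mathbb{G}(V, E) \times \mathbb{G}(V, E)$, and since functoriality preserves equalities, no two can coincide.

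Reading off the presheaf data is then routine. The presheaf $\yo(V)$ has a singleton vertex set and a singleton edge set $\{r\}$, with $r \cdot s = r \cdot t = \id[V]$ and $r \cdot \sigma = r$; the monomorphism condition holds vacuously, so $\yo(V) \in \Graph$ and it is visibly $I_0$, a single vertex equipped with its mandatory loop. The presheaf $\yo(E)$ has vertex set $\{s, t\}$ and edge set $\{\id[E], \sigma, sr, tr\}$; precomposition by $s$ and $t$ shows that $\id[E]$ runs from $s$ to $t$, $\sigma$ runs from $t$ to $s$, and $sr$, $tr$ are the mandatory loops at $s$ and $t$ respectively. These four edges map to the four distinct source-target pairs, so $(s, t) \colon X_E \to X_V \times X_V$ is injective and $\yo(E) \in \Graph$; the underlying graph is precisely $I_1$. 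The main obstacle I anticipate is the enumeration of $\mathbb{G}(E, E)$, in particular ruling out hidden identifications among $\id[E]$, $\sigma$, $sr$, $tr$ produced by the given relations; the right-composition test above handles this cleanly, after which identifying $\yo(V)$ and $\yo(E)$ with $I_0$ and $I_1$ is immediate from unpacking the Yoneda embedding.
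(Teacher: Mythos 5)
Your proof is correct and follows the same route the paper takes (the paper omits the argument as routine, but its implicit justification is exactly this direct computation of the representables $\mathbb{G}(-,V)$ and $\mathbb{G}(-,E)$ from the presentation). You supply more detail than the paper does, in particular the enumeration of $\mathbb{G}(E,E)$ and the right-composition test for distinctness, which is a sound way to rule out hidden identifications.
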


With the category $\Graph$ defined, we now state a corollary of \cref{prop:monoidal_unit} regarding possible units for closed symmetric monoidal structures:

\begin{corollary} \label{cor: graph_unit} \leavevmode
  \begin{enumerate}
      \item The unit of any monoidal structure on $\Graph$ is either the empty graph $\varnothing$ or the graph with a single vertex $I_0$.
      \item The unit of any closed monoidal structure on $\Graph$ is $I_0$ and hence it is semi-cartesian.
  \end{enumerate}
\end{corollary}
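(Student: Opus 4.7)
The plan is to apply \cref{prop:monoidal_unit} to $\Graph$, so I first need to verify that $\Graph$ is well-pointed and compute the subobjects of its terminal object. The terminal object is $I_0$, since for any $X \in \Graph$ there is a unique graph map $X \to I_0$ collapsing all vertices onto the single vertex and all edges onto its mandatory loop. For well-pointedness, I would observe that a morphism $f \from X \to Y$ in $\Graph$ is completely determined by its vertex component $f_V$: the monomorphism condition $(s,t) \from Y_E \to Y_V \times Y_V$ forces $f_E$ to send an edge from $v$ to $w$ to the unique edge from $f_V(v)$ to $f_V(w)$. Since maps $I_0 \to X$ correspond bijectively to elements of $X_V$, any two distinct morphisms must disagree on some vertex, and that vertex viewed as a global section separates them.

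With well-pointedness established, \cref{prop:monoidal_unit} tells us the unit $I$ of any monoidal structure sits in a monomorphism $I \ito I_0$. To enumerate the subobjects of $I_0$, suppose $X \ito I_0$ with $X$ having two distinct vertices $v \neq w$; then the two global sections $I_0 \to X$ picking out $v$ and $w$ become equal after composition with $X \to I_0$, contradicting monicity. Hence $X$ has at most one vertex, so $X \iso \varnothing$ or $X \iso I_0$, proving part (1).

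For part (2), I would rule out $\varnothing$ as the unit by the same argument used in the proof of \cref{ex: monoidal in set} for $\Set$. If $\varnothing$ were the unit of a closed monoidal structure, then the unit axiom gives $X \otimes \varnothing \iso X$ for every $X \in \Graph$. On the other hand, closedness provides a right adjoint to $X \otimes -$, so this functor preserves all colimits, in particular the initial object; since $\varnothing$ is initial in $\Graph$, this forces $X \otimes \varnothing \iso \varnothing$. Combining the two yields $X \iso \varnothing$ for every $X$, which is absurd (e.g.\ $I_0 \not\iso \varnothing$). Neither step is really an obstacle here; the only slightly subtle point is the verification of well-pointedness, which I would take care to spell out since it depends on the specific monomorphism condition in \cref{def:graph}.
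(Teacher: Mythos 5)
Your proof is correct and follows essentially the same route as the paper: apply \cref{prop:monoidal_unit} to conclude the unit is a subobject of the terminal object $I_0$, identify the two subobjects $\varnothing$ and $I_0$, and rule out $\varnothing$ for a closed structure because the tensor functor, having a right adjoint, must preserve the initial object. The only difference is that you explicitly verify well-pointedness of $\Graph$ and the enumeration of subobjects of $I_0$, details the paper leaves implicit; these verifications are accurate.
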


\begin{proof}
    This follows from \cref{prop:monoidal_unit}. In the category of graphs, $I_0$ is the terminal object and it has exactly two subobjects: $\varnothing$ and $I_0$.
    Moreover, $\varnothing$ is the initial object and hence it must be preserved by the functor $- \otimes X$ if the monoidal structure were closed.
    If $\varnothing$ is the unit, we get $X \iso \varnothing \otimes X \iso \varnothing$.
\end{proof}

We finally define two products of graphs that are of interest to us in this paper, in a graph theoretical sense:

\begin{definition} \label{def:graph products}
Let $X$, $X'$ be graphs in $\Graph$. 
\begin{itemize}
    \item The \emph{box product}, denoted $X \square X'$, is the graph with the vertex set $X_{V} \times X'_{V}$, and vertices $(v,v')$, $(w,w')$ have an edge between them if and only if either $v \sim w$ and $v' = w'$, or $v = w$ and $v' \sim w'$.
    \item The \emph{categorical product}, denoted $X \boxtimes X'$, is the graph with the vertex set $X_{V} \times X'_{V}$, and vertices $(v,v')$, $(w,w')$ have an edge between them if and only if either $v = w$ or $v \sim w$, and either $v' = w'$ or $v' \sim w'$
\end{itemize}
\end{definition}
Note that here we are writing $v \sim w$ to represent an edge between two vertices $v$ and $w$.

With these products defined, we have the following proposition:
\begin{proposition}
    Both the box and categorical products define closed symmetric monoidal structures on $\Graph$
\end{proposition}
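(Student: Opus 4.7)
The plan is to verify the symmetric monoidal and closedness conditions separately for each product. By \cref{cor: graph_unit}, the unit in each case must be $I_0$, and one checks directly that $I_0 \square X \iso X \iso I_0 \boxtimes X$, with the unitors induced by the canonical bijection $\{*\} \times X_V \iso X_V$. The symmetry isomorphism $s_{X,X'} \from X \otimes X' \to X' \otimes X$ is in each case the swap on vertex sets, which is a graph map because both products' edge relations are manifestly symmetric in their two arguments.

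For associativity, I would define $\alpha_{X, X', X''}$ via the canonical bijection $X_V \times (X'_V \times X''_V) \iso (X_V \times X'_V) \times X''_V$ on vertex sets and verify that edges are preserved. For the box product, a pair of distinct vertices $(v, v', v'')$ and $(w, w', w'')$ is adjacent in either triple product precisely when the two triples agree in exactly two coordinates and the remaining coordinates are adjacent in the corresponding factor; this condition is patently symmetric under reassociation. For the categorical product, the edge relation unfolds to ``in each coordinate, equal or adjacent'', which is again symmetric across the three factors. The pentagon and triangle coherence axioms then reduce to their counterparts for $\times$ in $\Set$, which hold trivially.

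For closedness, I would construct internal homs explicitly. For the box product, set $\hom_\square(X, X')_V = \Graph(X, X')$, with distinct maps $f, g$ adjacent iff for every $v \in X_V$, either $f(v) = g(v)$ or $f(v) \sim g(v)$ in $X'$. For the categorical product, set $\hom_\boxtimes(X, X')_V = \Graph(X, X')$, with distinct maps $f, g$ adjacent iff whenever $v \sim w$ in $X$ (including the case $v = w$), either $f(v) = g(w)$ or $f(v) \sim g(w)$ in $X'$. In both cases one verifies that the underlying edge set defines a graph (i.e.\ the relation is symmetric and $(s,t)$ is mono). The adjunction is then witnessed by the currying bijection $F \mapsto \bigl(z \mapsto F(z, -)\bigr)$, whose inverse is uncurrying; one checks that the edge-preservation condition for $F \from Z \otimes X \to X'$ is, upon currying, precisely the condition that $z \mapsto F(z, -)$ be a graph map $Z \to \hom_\otimes(X, X')$.

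The principal obstacle is this last matching step: the edge condition for a map out of the tensor product must unfold exactly to the edge condition for a map into the internal hom, and the two products impose quite different conditions that need to be tracked carefully. A secondary subtlety is the mandatory loop on every vertex of a graph in $\Graph$, which is why the internal hom edge conditions are phrased with ``equal or adjacent'' rather than ``adjacent'' alone; once this bookkeeping is in place, the verification is a direct but mechanical case analysis on the defining relations.
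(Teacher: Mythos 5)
Your proposal is correct and follows essentially the same route as the paper: the paper likewise dispatches the monoidal and symmetry axioms as routine and establishes closedness by exhibiting exactly the internal homs you describe (graph maps as vertices, with the same two edge relations, yours being the more carefully stated version of the categorical-product hom once the mandatory loops are taken into account). You supply more detail on the unitors, associators, and the currying bijection than the paper does, but the substance is identical.
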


\begin{proof}
    It is easy to check that both products are monoidal with unit $I_0$, and are each symmetric.
    All that remains is closure:

    For the box product, the right adjoint to $X \square -$ is given by $\hom^\square(X,X')_V = \Graph(X,X')$, the set of graph maps from $X$ to $X'$. 
    There is an edge between distinct maps $f$ and $g$ if for all $v \in X_V$, either $f(v) = g(v)$ or $f(v) \sim g(v)$.

    For the categorical product, the right adjoint to $X \boxtimes -$ is given by $\hom^\boxtimes(X,X')_V = \Graph(X,X')$, the set of graph maps from $X$ to $X'$. 
    There is an edge between distinct maps $f$ and $g$ if whenever two vertices $v$ and $w$ in $X$ are connected by an edge, $f(v) \sim g(w)$ in $X'$.    
\end{proof}

There are also several other notions of products of graphs, all with the vertex set given by $X_V \times Y_V$, which are summarized in the table below:

\begin{center}
\begin{tabular}{||c c c c c||} 
 \hline
 \textbf{Name} & \textbf{Edge Condition} & \textbf{Monoidal} & \textbf{Symmetric} & \textbf{Closed}  \\ [0.5ex] 
 \hline\hline
 Categorical & $(v=w \lor v \sim w) \land (v' = w' \lor v'\sim w')$ & Yes & Yes & Yes\\ 
 \hline
  Box & $(v=w \land v' \sim w') \lor (v \sim w \land v'=w')$ & Yes & Yes & Yes\\ 
 \hline
  Tensor & $v \sim w \land v' \sim w'$ & No & N/A & N/A\\ 
 \hline
  Lexicographical & $(v\sim w) \lor (v = w \land v'\sim w')$ & Yes & No & No\\ 
  \hline
  Conormal & $v \sim w \lor v' \sim w'$ & Yes & Yes & No\\ 
 \hline
  Modular & $(v \sim w \land v' \sim w') \lor (v \nsim w \land v' \nsim w') $ & No & N/A & N/A\\ [1ex] 
 \hline
\end{tabular}
\end{center}
\section{Products via the left Kan extension} \label{sec:Kan ext}

\newcommand{\pair}[1]{\textbf{#1}}
To classify closed symmetric monoidal structures on $\Graph$, we mimic the proof of \cref{ex: monoidal in set}, relying on \cref{Prop: monoidal as kan extensions}. 
As such, we need to examine possible functors $F \colon \mathbb{G} \times \mathbb{G} \to \Graph$ such that the left Kan extension of $F$ along $\hat{\yo} \times \hat{\yo}$ is closed symmetric monoidal.
In this section, we establish preliminary results that will be used in the following section to provide the desired classification.

Our analysis depends primarily on the pointwise formula for the left Kan extension, which for graphs $X$ and $X'$ is given by:
$$(\lan_{\hat{\yo} \times \hat{\yo}}F)(X,X') = \Colim\bigl(\hat{\yo} \times \hat{\yo} \downarrow (X,X') \xrightarrow{\pi^{(X,X')}} \mathbb{G} \times \mathbb{G} \xrightarrow{F} \Graph\bigr)\text{.}$$
Recall that a colimit in the category of graphs is computed by taking the disjoint union of all vertices in the diagram, and quotienting by an equivalence relation $\sim$. 
Here $\sim$ is generated by the condition that a vertex $v \sim w$ if there exists a map $f$ in the colimit diagram such that $f(v) = w$. 
There is an edge between equivalence classes $[v]$ and $[w]$ if there exists a $a \in [v]$ and $b \in [w]$ such that there is an edge between $a$ and $b$.

For notational convenience, going forward, we write $\pi$ for $\pi^{(X,X')}$ and $\yo$ for $\hat{\yo}$.

 We first analyze the category $\yo \times \yo \downarrow (X, X')$.
 The objects in this category are of the form

 \[\Bigl((A,A') \in \bbG^2, (f, f') \from \bigl(\yo(A) , \yo(A')\bigr) \to (X, X')\Bigr)\text{.}\] 
 We see that if $(A,A')=(E,E)$, this corresponds to a choice of an edge in $X$ and an edge in $X'$. 
 If $(A,A') = (V,V)$ this corresponds to a choice of a vertex in each graph, and if $(A,A') = (V,E)$ or $(E,V)$ this corresponds to a choice of a vertex in one graph and edge in another. 
 A morphism $H \colon \bigl((A,A'), (f,f')\bigr) \to \bigl((B,B'), (g,g')\bigr)$ is a morphism in $\mathbb{G} \times \mathbb{G}$ such that the diagram
 \begin{center}
\begin{tikzcd}
{\yo \times \yo(A,A')} \arrow[r, "{(f,f')}"] \arrow[d, "\yo \times \yo(H)"', shift left] & {(G, G')} \\
{\yo \times \yo(B,B')} \arrow[ru, "{(g,g')}"']                         &            
\end{tikzcd}
\end{center}
commutes. 

To compute the colimit, one might want to work with a subcategory of  $\yo \times \yo \downarrow (X, X')$ which we now define.

\begin{definition} \label{def:subcategory of comma cat}
Let $X, X'$ be graphs.
\begin{enumerate}
    \item The \emph{edge subcategory} $\bigl(\yo \times \yo \downarrow (X,X')\bigr)_{(E,E)}$ of $\yo \times \yo \downarrow (X, X')$ is the full subcategory with objects of the form $\bigl((E,E),(f,f') \colon (I_1,I_1) \to (X,X')\bigr)$.
    \item The \emph{edge subcategory} $(\yo \downarrow X)_{E}$ of $\yo \downarrow X$ is the full subcategory with objects of the form $(E,f \colon \yo(E) \to X)$.
\end{enumerate}
\end{definition}

\begin{lemma} \label{yoE_final}
    The inclusion of the edge subcategory $i \colon  (\yo \downarrow X)_E \hookrightarrow \yo \downarrow X$ is final for any graph $X$.
\end{lemma}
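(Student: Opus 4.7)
My plan is to unpack the definition directly: $i$ is final if and only if for every object $(A, f)$ of $\yo \downarrow X$, the comma category $(A, f) \downarrow i$ is non-empty and connected. Since $\ob \bbG = \{V, E\}$, the proof splits into two cases. The case $A = E$ is trivial: $(E, f)$ already lies in the edge subcategory, so the pair $((E, f), \id)$ is an initial object of $(E, f) \downarrow i$, giving both non-emptiness and connectedness for free.

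The substantive case is $A = V$, where by the Yoneda lemma $f$ corresponds to a vertex $v \in X_V$. The key ingredient will be the distinguished loop on $v$: applying $X$ to $r \from E \to V$ yields an element $\ell_v \coloneqq X(r)(v) \in X_E$, and the identities $rs = rt = \id_V$ become $X(s)(\ell_v) = X(t)(\ell_v) = v$, confirming that $\ell_v$ has both endpoints at $v$. Writing $g_v \from \yo(E) \to X$ for the map classifying $\ell_v$, the pair $((E, g_v), s)$ is therefore an object of $(V, v) \downarrow i$, which settles non-emptiness.

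For connectedness I intend to show that every other object $((E, g), h)$ admits a morphism \emph{from} this canonical one, so that a single arrow from $((E, g_v), s)$ suffices. Writing $\tilde{g} \in X_E$ for the edge classified by $g$, the defining constraint forces $X(h)(\tilde{g}) = v$, i.e.\ $v$ is the source or target of $\tilde{g}$ according to whether $h = s$ or $h = t$. When $h = s$, the endomorphism $\alpha = sr \in \bbG(E, E)$ does the job: the identity $sr \circ s = s$ provides the required commuting triangle, and $X(sr)(\tilde{g}) = X(r)X(s)(\tilde{g}) = X(r)(v) = \ell_v$ expresses the fact that $sr$ sends an edge to the loop on its source. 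The case $h = t$ is entirely symmetric, using $\alpha = tr$ together with $tr \circ s = t$.

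The only real subtlety, and the main bookkeeping obstacle, is the contravariance: the map $r \from E \to V$ in $\bbG$ induces the loop-assigning map $X_V \to X_E$ after applying the presheaf $X$, and the roles of $s$ and $t$ swap accordingly. Once that is kept straight, the whole argument reduces to verifying a handful of identities in $\bbG$, all of which are direct consequences of the presenting relations.
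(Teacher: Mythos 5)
Your proof is correct and follows essentially the same route as the paper's: both cases hinge on the distinguished loop $X(r)(v)$ and the relations $rs=rt=\id[V]$, and your connecting morphisms $sr$, $tr$ are exactly the maps $f_ir$ the paper uses to link an arbitrary object of $(c \downarrow i)$ to the canonical loop object. Your observation that $((E,f),\id)$ is initial in the edge case, and your explicit treatment of non-emptiness, are minor streamlinings of the same argument.
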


\begin{proof}
We want to show that for every object $c \in \yo \downarrow X$ the comma category $(c\downarrow i)$ is connected. 
We know that $c$ is either of the form $\bigl(V,f \colon \yo(V) \to X\bigr)$ which is precisely a choice of a vertex in X, or $\bigl(E, f \colon \yo(E) \to X\bigr)$ which is precisely a choice of edge (possibly a loop) in $X$, where one vertex is identified as the $s$ vertex and the other as $t$. 
Recall that objects in $(\yo \downarrow X)_E$ are of the form $\bigl(E, e \colon  \yo(E) \to X\bigr)$ which again is simply a choice of an edge. 
Thus, if $c$ is of the form $\bigl(A,c \colon  \yo(A) \to X\bigr)$ objects in $(c\downarrow i)$ are a choice of an edge in $X$ (with source and target identified), along with a morphism $k \colon  A \to E$ in $\mathbb{G}$ such that the following diagram commutes: 
\begin{center}
\begin{tikzcd}
\yo(A) \arrow[r, "c"] \arrow[d, "\yo(k)"'] & X \\
\yo(E) \arrow[ru, "e"']                                   &  
\end{tikzcd}
\end{center}

We thus have two cases, as $c$ can either be a choice of edge or vertex. We show that $c \downarrow i$ is connected in either case.

\textbf{Case 1:} Suppose first that $c$ corresponds to a choice of vertex. Moreover suppose $(c,e_1, f_1), (c,e_2,f_2) \in (c\downarrow i)$, where $f_1, f_2 \in \mathbb{G}(V,E)$.  
Here $c$ is of the form $\bigl(V,c \colon \yo(V) \to G\bigr)$, and $e_1, e_2$ are of the form $\bigl(E,e_1 \colon  \yo(E) \to X\bigr)$ and $\bigl(E,e_2 \colon  \yo(E) \to X\bigr)$.
Since both $f_1$ and $f_2$ make the above diagram commute, we have that $$e_1 \yo f_1 = c = e_2 \yo f_2\text{.}$$
Note that $\yo(V)$ is $I_0$ and $\yo(E)$ is $I_1$, with one vertex labelled $s$ and the other $t$. 
Thus $\yo f_1$ corresponds to the map taking the single vertex in $\yo(V)$ to the $f_1$ vertex of $\yo(E)$, since $f_1$ is either $s$ or $t$. 
The map $f_2$ also acts identically. 
Thus, the above equation simply means that the $f_1$ vertex of the edge chosen by $e_1$ must equal the $f_2$ vertex of the edge chosen by $e_2$, which both must be the vertex chosen by $c$.
For simplicity, we refer to the vertex in the image of $c$ simply as $c$. 

Now, consider the element $(c,e,s)$ of $(c\downarrow i)$, where $e$ corresponds to choosing the loop on $c$ as the edge, and $s \colon  V \to E$ (note that $t$ also works). 
We claim that the diagram:

\begin{center}
\begin{tikzcd}
        & V \arrow[ld, "f_1"'] \arrow[d, "s"] \arrow[rd, "f_2"] &                          \\
E \arrow[rd, "e_1"'] & E \arrow[l, "f_1r"'] \arrow[r, "f_2r"] \arrow[d, "e"] & E \arrow[ld, "e_2"] \\
                          & X                                                          &                         
\end{tikzcd}
\end{center}
commutes. Clearly the top triangles commute since $f_1rs = f_1\id[V] = f_1$ and similarly $f_2rs = f_2$. But we also know that $f_1r$ maps both vertices to the $f_1$ vertex (recall $f_1 = s$ or $t$) which is by definition $c$. Then $e_1f_1r$ is equal to choosing the loop on vertex $c$, so $e_1f_1r = e$. Similarly $e_2f_2r$ = $e$. This shows that the two objects $(c,e_1,f_1)$ and $(c,e_2,f_2)$ are connected through $(c,e,s)$.

\textbf{Case 2}: Suppose $c$ corresponds to a choice of edge. Let $(c,e_1,f_1)$ and $(c,e_2,f_2)$ be as above, where $f_1, f_2 \in \mathbb{G}(E,E)$. Then if $e$ represents the map: $\yo(E) \to G$ corresponding to the choice of edge $c$ in $X$, clearly the diagram:
\begin{center}
\begin{tikzcd}
                     & E \arrow[ld, "f_1"'] \arrow[d, "{\id[E]}"] \arrow[rd, "f_2"] &                     \\
E \arrow[rd, "e_1"'] & E \arrow[l, "f_1"'] \arrow[r, "f_2"] \arrow[d, "e"]          & E \arrow[ld, "e_2"] \\
                     & X                                                            &                    
\end{tikzcd}
\end{center}
commutes, since by definition $e_1f_1 = e$ and $e_2f_2 = e$ since $(c,e_1,f_1)$ and $(c,e_2,f_2)$ are objects in the comma category. So both objects are connected through $(c,e,\id[E])$.

Thus, for any $c$, the comma category $(c \downarrow i)$ is connected, so $i$ is final. 
\end{proof}
 
 We now have the following corollaries: 

\begin{corollary} \label{cor:yo_E final}

Let $X, X' \in \Graph$. Then the inclusion $i \colon  \bigl(\yo \times \yo \downarrow (X,X')\bigr)_{(E,E)} \hookrightarrow \yo \times \yo \downarrow (X,X')$ is final.
\end{corollary}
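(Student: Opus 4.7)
The plan is to deduce this directly from \cref{yoE_final} together with the identification of the comma category as a product, and the fact that the product of final functors is final.

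First, I would invoke equation (3) of \cref{lemma:comma_cat_isos} to identify the comma category $\yo \times \yo \downarrow (X, X')$ with the product $(\yo \downarrow X) \times (\yo \downarrow X')$. Under this isomorphism, an object of the full subcategory $(\yo \times \yo \downarrow (X,X'))_{(E,E)}$, which is a pair of the form $((E,E), (f, f') \from (\yo(E), \yo(E)) \to (X, X'))$, corresponds exactly to a pair consisting of an object of $(\yo \downarrow X)_E$ and an object of $(\yo \downarrow X')_E$. Morphisms in the subcategory are precisely those $H = (h, h') \from (E,E) \to (E,E)$ making the relevant square commute in each coordinate, so the inclusion $i$ is identified with the product inclusion
$$(\yo \downarrow X)_E \times (\yo \downarrow X')_E \hookrightarrow (\yo \downarrow X) \times (\yo \downarrow X')\text{.}$$

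Next, I would apply \cref{yoE_final} twice to conclude that each factor of this product inclusion is a final functor. Finally, I would invoke the general fact that a product of final functors is final: for any $(c, c')$ in the codomain, the comma category $(c, c') \downarrow (i \times i')$ is isomorphic to $(c \downarrow i) \times (c' \downarrow i')$, and a product of two non-empty connected categories is again non-empty and connected. Applying this to our situation gives that $i$ is final, which is the desired conclusion.

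The main obstacle, if any, is a bookkeeping one: carefully checking that the isomorphism of \cref{lemma:comma_cat_isos} really does restrict to the claimed identification of edge subcategories, i.e.~that ``$(E,E)$-objects'' on the left correspond precisely to ``$E$-objects in each coordinate'' on the right. This is immediate from the explicit description of the isomorphism given in the lemma, but it is the one point to be explicit about before the rest of the argument proceeds mechanically.
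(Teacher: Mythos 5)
Your proposal is correct and matches the paper's own argument, which likewise deduces the corollary from \cref{yoE_final} together with the fact that a product of final functors is final; you simply spell out the identification of $\yo \times \yo \downarrow (X,X')$ with $(\yo \downarrow X) \times (\yo \downarrow X')$ via \cref{lemma:comma_cat_isos}, which the paper leaves implicit.
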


\begin{proof}
This follows by \cref{yoE_final} and the fact that the product of final functors is final.
\end{proof}

\begin{corollary} \label{cor:colimit formula}
The monoidal structure $\otimes$ is completely determined by $F(E,E)$.
\end{corollary}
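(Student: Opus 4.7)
The plan is to combine the pointwise formula for the left Kan extension with the cofinality result of \cref{cor:yo_E final}. Given graphs $X, X' \in \Graph$, the pointwise formula expresses the product as
\[ X \otimes X' \;\iso\; \Colim\bigl((\yo \times \yo \downarrow (X, X')) \xrightarrow{\pi} \mathbb{G} \times \mathbb{G} \xrightarrow{F} \Graph\bigr), \]
and by \cref{cor:yo_E final} the inclusion of the edge subcategory into the full comma category is final, so this colimit is naturally isomorphic to the colimit of the restricted diagram on $(\yo \times \yo \downarrow (X,X'))_{(E,E)}$.

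On the edge subcategory, every object is sent by $F \circ \pi$ to the single graph $F(E,E)$, and every morphism is sent to an endomorphism of $F(E,E)$ obtained by applying $F$ to some endomorphism of $(E,E)$ in $\mathbb{G} \times \mathbb{G}$. The endomorphism monoid of $(E,E)$ in $\mathbb{G} \times \mathbb{G}$ is finite and generated coordinate-wise by the endomorphisms of $E$ in $\mathbb{G}$, so the action of $F$ on this subcategory is pinned down by its values on finitely many generators --- all of which are forced by the graph $F(E,E)$ together with the functoriality of $F$ and the defining relations in $\mathbb{G}$. Consequently the colimit, and therefore the bifunctor $\otimes$ at every pair $(X,X')$, is reconstructed from $F(E,E)$.

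I do not expect any genuine obstacles here: this corollary is essentially a bookkeeping assembly of the pointwise formula together with \cref{cor:yo_E final}, which is why the authors are content to mark it with \noproof. The only subtlety worth flagging is the interpretation of \emph{``determined by $F(E,E)$''}: it should be read as including the rigid functorial action of $F$ on the finite endomorphism monoid of $(E,E)$, which is in turn forced once the underlying graph $F(E,E)$ is specified.
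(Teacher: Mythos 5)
Your argument is exactly the intended one: the paper leaves this corollary unproved because it follows immediately from the pointwise formula together with the finality of the edge subcategory (\cref{cor:yo_E final}), which restricts the colimit diagram to objects sent to $F(E,E)$ and morphisms sent to images of endomorphisms of $(E,E)$. Your closing caveat is well taken, though slightly overstated: strictly speaking the colimit depends on $F$ restricted to the full subcategory on $(E,E)$ --- the graph \emph{together with} the induced action of the endomorphism monoid and the labelling of vertices by the maps $F(a,a')$ --- and this action is not forced by the abstract graph $F(E,E)$ alone; the paper only pins it down later, in the four-distinctly-labelled-vertex situation reached in \cref{TH:monoidal structures}.
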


\begin{proof}
    Let $F \colon \mathbb{G} \times \mathbb{G} \to \Graph$ be such that $(\lan_{\hat{\yo} \times \hat{\yo}}F)$ is a monoidal product $\otimes$ on $\Graph$.
    By the pointwise formula for left Kan extensions, $X \otimes X'$ is given by $\Colim\bigl(\hat{\yo} \times \hat{\yo} \downarrow (X,X') \xrightarrow{\pi^{(X,X')}} \mathbb{G} \times \mathbb{G} \xrightarrow{F} \Graph\bigr)$.
    Since $\bigl(\yo \times \yo \downarrow (X,X')\bigr)_{(E,E)} \hookrightarrow \yo \times \yo \downarrow (X,X')$ is final, we could instead compute the colimit over $\bigl(\yo \times \yo \downarrow (X,X')\bigr)_{(E,E)}$ for which only the value of $F$ at $(E, E)$ is required.
\end{proof} 

\section{Main Theorem}\label{sec:main-thm}

In this section, we prove our main theorem (\cref{main th}), characterizing closed symmetric monoidal structures on the category of graphs.

For the remainder of the paper, we impose the following assumption: 
\begin{assumption} \label{assumption4}
    Fix a closed symmetric monoidal structure $\otimes$ on $\Graph$, and set $F_\otimes := \otimes \circ (\yo \times \yo)$.
\end{assumption}

We claim that the only two such functors $\otimes$ are the box and categorical products.
In broad strokes, the argument can be summarized as:
\begin{enumerate}
    \item $F_\otimes(E,E)$ cannot be a graph with more than four vertices (\cref{at most 4}).
    \item $F_\otimes(E,E)$ cannot be a graph with less than four vertices (\cref{cor:exactly 4}).
    \item If $F_\otimes(E,E)$ has exactly four vertices, and the resulting product is closed symmetric monoidal, then the resulting product is either the box or categorical product (\cref{TH:monoidal structures}).
\end{enumerate}

Recall that, by \cref{Prop: monoidal as kan extensions}, we know that $\otimes \iso \lan_{\yo \times \yo} F_\otimes$.
By \cref{cor:colimit formula}, we only need to consider the action of $F_\otimes$ on $(E,E)$.

\begin{lemma} \label{at most 4}
    The product $I_1 \otimes I_1$ has at most four vertices.
\end{lemma}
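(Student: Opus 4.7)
The plan is to bound $|(I_1 \otimes I_1)_V|$ by exhibiting a vertex-surjection onto it from a graph with exactly four vertices, obtained by applying closure of $\otimes$ to a well-chosen epimorphism in $\Graph$.

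First I would verify that the codiagonal $\iota \from I_0 \sqcup I_0 \to I_1$, sending the two vertices of the coproduct to $s$ and $t$ in $I_1$, is an epimorphism in $\Graph$. Indeed, given graph maps $g, h \from I_1 \to Y$ with $g\iota = h\iota$, the restrictions of $g$ and $h$ to the vertex set of $I_1$ coincide; the defining monomorphism condition $Y_E \to Y_V \times Y_V$ from \cref{def:graph} then forces the image of each edge of $I_1$ to be determined by the images of its endpoints, so $g = h$. This step is the one place where the argument uses the subcategory condition in the definition of $\Graph$; it would fail in the ambient presheaf category $\Set^{\mathbb{G}^{\op}}$, since there one could have parallel non-loop edges in $Y$.

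Closure of $\otimes$ now implies that $-\otimes I_1$ is a left adjoint and therefore preserves both epimorphisms and coproducts. Applying it to $\iota$ yields an epimorphism $\iota \otimes I_1 \from (I_0 \sqcup I_0) \otimes I_1 \to I_1 \otimes I_1$, and the unit axiom $I_0 \otimes I_1 \iso I_1$ from \cref{cor: graph_unit} identifies the domain as $I_1 \sqcup I_1$, which has four vertices. Finally, the vertex functor $(-)_V \from \Graph \to \Set$ is itself a left adjoint---its right adjoint sends a set $S$ to the indiscrete graph on $S$ with edge set $S \times S$---so it too preserves epimorphisms; applying it to $\iota \otimes I_1$ produces a surjection $(I_1 \sqcup I_1)_V \twoheadrightarrow (I_1 \otimes I_1)_V$, giving $|(I_1 \otimes I_1)_V| \leq 4$. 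The only nontrivial step is verifying the first claim; everything after is a routine application of adjoint preservation properties.
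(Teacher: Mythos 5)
Your proof is correct and follows essentially the same route as the paper's: the paper encodes the fact that $\partial \from I_0 \sqcup I_0 \to I_1$ is an epimorphism as a pushout square and transports that square along $I_1 \otimes -$, whereas you invoke preservation of epimorphisms by left adjoints directly --- the same argument in different packaging. You also supply two details the paper leaves implicit (that this map is indeed epi in $\Graph$, and that epimorphisms of graphs are surjective on vertices, via the right adjoint to $(-)_V$), which is welcome.
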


\begin{proof}
Consider the following pushout square in $\Graph$:
    \[ \begin{tikzcd}[column sep = large]
        I_0 \sqcup I_0 
            \arrow[r,tail,"\partial"] \arrow[d,tail,swap,"\partial"] & 
        I_1 
            \arrow[d,"\id"] \\
        I_1 
            \arrow[r,"\id"] & 
        I_1
    \end{tikzcd} \]
Applying $I_1 \otimes -$ and using the fact that the monoidal structure is closed and has unit $I_0$, we get a pushout square
    \[ \begin{tikzcd}[column sep = large]
        I_1 \sqcup I_1 
            \arrow[r,"\partial \otimes \id"] \arrow[d,swap,"\partial \otimes \id"] & 
        I_1 \otimes I_1 
            \arrow[d,"\id"] \\
        I_1 \otimes I_1 
            \arrow[r,"\id"] & 
        I_1 \otimes I_1
    \end{tikzcd} \]
(Note that we used here that functors preserve identity morphisms.)
But a square of this form is a pushout if and only if $\partial \otimes \id \colon I_1 \sqcup I_1 \to I_1 \otimes I_1$ is an epimorphism, and hence in particular surjective on vertices.
As its domain has four vertices, the codomain cannot have more than four vertices.
\end{proof}

The task of showing that $F_\otimes(E,E)$ must have at least four vertices is significantly more involved. 
We begin with the following proposition and definition, which are used throughout the remainder of this section:

\begin{proposition} \label{image of I_0}
The functor $F_\otimes$ must satisfy: $F_\otimes(V,V)=I_0$ and $F_\otimes(V,E)=F_\otimes(E,V)=I_1$.
\end{proposition}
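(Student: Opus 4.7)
The plan is to derive this proposition as an immediate consequence of \cref{cor: graph_unit} together with the unit axioms of a monoidal category. By \cref{cor: graph_unit}, any closed monoidal structure on $\Graph$ has $I_0$ as its unit, so in particular the fixed structure $\otimes$ from \cref{assumption4} satisfies this.

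Next, I would recall that the Yoneda embedding $\yo \from \mathbb{G} \to \Set^{\mathbb{G}^\op}$ (which factors through $\Graph$) sends $V$ to $I_0$ and $E$ to $I_1$. Unfolding the definition $F_\otimes := \otimes \circ (\yo \times \yo)$ then gives
\[ F_\otimes(V,V) = I_0 \otimes I_0, \qquad F_\otimes(V,E) = I_0 \otimes I_1, \qquad F_\otimes(E,V) = I_1 \otimes I_0. \]
Each of these is identified by the structural unit isomorphisms $\lambda$ and $\rho$ supplied by \cref{def:monoidal category}: we have $I_0 \otimes I_0 \iso I_0$ via $\lambda_{I_0}$ (equivalently $\rho_{I_0}$), and $I_0 \otimes I_1 \iso I_1 \iso I_1 \otimes I_0$ via $\lambda_{I_1}$ and $\rho_{I_1}$ respectively. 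Since the classification through \cref{Prop: monoidal as kan extensions} only ever uses $F_\otimes$ up to natural isomorphism, these identifications suffice to establish the proposition as stated.

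There is no real obstacle here; the statement is a direct unpacking of the unit axioms once the unit has been pinned down, and the unit has already been pinned down to $I_0$ in \cref{cor: graph_unit}. The only thing to flag in writing is that equality in the statement is meant in the sense of canonical isomorphism inherited from the unitors, which is the natural notion in the Day convolution framework of \cref{Prop: monoidal as kan extensions}.
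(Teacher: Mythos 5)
Your proposal is correct and follows essentially the same route as the paper: unfold $F_\otimes = \otimes \circ (\yo \times \yo)$ using $\yo(V)=I_0$, $\yo(E)=I_1$, invoke \cref{cor: graph_unit} to pin the unit down to $I_0$, and conclude by unitality. The only difference is that you explicitly flag the equality-versus-canonical-isomorphism point, which the paper glosses over but which does no harm.
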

\begin{proof}
    We have:
\begin{align*}
    F_\otimes(V,V) &= \otimes \circ \yo \times \yo (V,V) \\
    &= I_0 \otimes I_0 \\
    &= I_0
\end{align*}
since $\otimes$ is monoidal and $I_0$ must be the unit by \cref{cor: graph_unit}.
The other parts are analogous.
\end{proof}

\begin{definition} \label{def:vertex representation}
We define \textit{labelled} vertices as follows:
\begin{itemize}
\item[(1)] We say a vertex $w$ in $F_\otimes(E,E)$ is \emph{labelled} $(a,a')$ for $a, a' \in \{s,t\}$ if $F_\otimes(a,a')(0) = w$, where $0$ is the unique vertex of $I_0$.
\item[(2)] We say a vertex $w$ in $F_\otimes(E,V)$ is \emph{labelled} $a$ for $a \in \{s,t\}$ if $F_\otimes(a,\id)(0) = w$ (where, once again, $0$ is the unique vertex of $I_0$), and the vertices in $F_\otimes(V,E)$ are labelled analogously.
\end{itemize}
\end{definition}

\begin{example}
    Consider the closed symmetric monoidal product $\square$. 
    If $F_\square$ is such that $\square \iso \lan_{\yo \times \yo} F_\square$, then by \cref{Prop: monoidal as kan extensions} we must have $F_\square (V,V)=I_0$, $F_\square (E,V)=F_\square (V,E)=I_1$ and $F _\square(E,E)= C_4$. We also know $F_\square$ also acts on the morphisms $(s,s)$, $(s,t)$, $(t,s)$, and $(t,t)$ by sending them to distinct graph maps from $I_0$ to $C_4$. 
    So, in this case, the four vertices of $F_\square (E,E)$ are each are labelled one of $(s,s)$, $(s,t)$, $(t,s)$, and $(t,t)$, based on where $F_\square (s,s)$, $F_\square (s,t)$, $F_\square (t,s)$, and $F_\square (t,t)$ map $I_0$, as shown below.
\begin{center}
\begin{tikzpicture}[node distance={25mm}, main/.style = {draw, circle}] 
\node[main] (1) {$(s,s)$}; 
\node[main] (2) [right of=1] {$(t,s)$};
\node[main] (3) [below of=2] {$(t,t)$};
\node[main] (4) [left of=3] {$(s,t)$};

\draw (1) -- (2);
\draw (2) -- (3);
\draw (3) -- (4);
\draw (4) -- (1);

\end{tikzpicture} 
\end{center}
\end{example}

Many of the following proofs also rely on using the pointwise formula and analyzing the colimit diagrams.
As the diagrams can get rather complex, before proceeding it makes sense to first gain some familiarity with what these colimits can look like.
We know that objects in $\bigl(\yo \times \yo \downarrow (X,X')\bigr)_{(E,E)}$ are of the form $\bigl((E,E), (f,f') \colon  (I_1, I_1) \to (X,X')\bigr)$.
For simplicity, we refer to such an object as $(f,f')$.
\begin{example} \label{comma cat ex}
Consider the category $\bigl(\yo \times \yo \downarrow (I_0,I_1)\bigr)_{(E,E)}$.
This category has four objects, each defined by a map $(I_1,I_1) \to (I_0,I_1)$. 
As there is only one map from $I_1$ to $I_0$, these objects are completely determined by maps $I_1$ to $I_1$. 
Call the four objects $\textsf{Idty}$, $\textsf{Flip}$, $\textsf{Const}_s$, and $\textsf{Const}_t$, based how they map $I_1$.
Note that we label the vertices of $I_1$ as $s$ and $t$.
The resulting diagram and morphisms can be seen below:

\begin{center}
\begin{tikzcd}
\textsf{Const}_s \arrow[rr, "{(-,sr)}"] \arrow[rrdd, "{(-,tr)}"'] \arrow["{(-,-)}"', loop, distance=2em, in=215, out=145] &  & \textsf{Idty} \arrow[dd, "{(-,\sigma)}", shift left] \arrow["{(-,\id)}"', loop, distance=2em, in=125, out=55]  &  & \textsf{Const}_t \arrow[ll, "{(-,tr)}"'] \arrow[lldd, "{(-,sr)}"] \arrow["{(-,-)}"', loop, distance=2em, in=35, out=325] \\
                                                                                                           &  &                                                                                                    &  &                                                                                                           \\
                                                                                                           &  & \textsf{Flip} \arrow[uu, "{(-,\sigma)}", shift left] \arrow["{(-,\id)}"', loop, distance=2em, in=305, out=235] &  &                                                                                                          
\end{tikzcd}                                     
\end{center}
Here each arrow represents multiple morphisms, as `-' is used to represent any morphism in $\mathbb{G}(E,E)$.

The objects represent maps as follows:
\begin{itemize}
    \item The object $\textsf{Idty}$ corresponds to the identity map, sending the vertex $s$ to $s$ and $t$ to $t$.
    \item The object $\textsf{Flip}$ corresponds to the map sending the $s$ vertex to $t$ and the $t$ vertex to $s$.
    \item The object $\textsf{Const}_s$ corresponds to the constant map sending both vertices to the $s$ vertex of $I_1$.
    \item The object $\textsf{Const}_t$ corresponds to the constant map sending both vertices to the $s$ vertex of $I_1$.
\end{itemize}

If $F_\otimes$ is a functor $F_\otimes \colon  \mathbb{G} \times \mathbb{G} \to \Graph$, each of these objects get sent to graphs in the colimit diagram by $F_\otimes \circ \pi$, and the morphisms between them to graph maps. 

\end{example}

With this out of the way, we now proceed with further restricting the possibilities for $F_\otimes$.
Note that in the following proofs, for simplicity, when referring to maps such as $F_\otimes(sr,\sigma)$, we omit the $F_\otimes$ and simply write $(sr,\sigma)$.

\begin{lemma} \label{double labels}
Suppose $F_\otimes(E,E)$ has a vertex with more than one label. Then either $(s,s) = (t,s)$ and $(s,t) = (t,t)$, $(s,s) = (s,t)$ and $(t,s) = (t,t)$, or both.
\end{lemma}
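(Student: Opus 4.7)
The plan is to perform a case analysis on a pair of distinct labels $(a_1,a_1')\neq(a_2,a_2')$ that identify the same vertex of $F_\otimes(E,E)$, depending on whether they differ in exactly one coordinate or in both. Differing in the first coordinate will give the first outcome after applying $F_\otimes(\id[E],\sigma)$, differing in the second coordinate will give the second outcome after applying $F_\otimes(\sigma,\id[E])$, and differing in both coordinates will be ruled out using $F_\otimes(r,\id[E])$. The ``both'' conclusion then arises precisely when identifications of the two different ``one-coordinate'' types are simultaneously present.

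First I would record how the relevant morphisms of $\mathbb{G}\times\mathbb{G}$ transport labels. The key identity is the bifunctoriality relation $F_\otimes(p,q)\circ F_\otimes(a,b) = F_\otimes(pa,qb)$, which together with $\sigma s = t$, $\sigma t = s$, and $ra = \id[V]$ for $a\in\{s,t\}$ yields: $F_\otimes(\sigma,\id[E])$ sends the label $(a,b)$ of $F_\otimes(E,E)$ to $(\sigma a,b)$; $F_\otimes(\id[E],\sigma)$ sends $(a,b)$ to $(a,\sigma b)$; and $F_\otimes(r,\id[E])\colon F_\otimes(E,E)\to F_\otimes(V,E)$ and $F_\otimes(\id[E],r)\colon F_\otimes(E,E)\to F_\otimes(E,V)$ collapse $(a,b)$ onto the single-coordinate labels $b$ and $a$, respectively. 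Since by \cref{cor: graph_unit} the unit is $I_0$, the left and right unitors provide $F_\otimes(V,E)=I_0\otimes I_1\iso I_1$ and $F_\otimes(E,V)=I_1\otimes I_0\iso I_1$; applying naturality of the unitors to $\yo(s),\yo(t)\colon I_0\to I_1$ shows these isomorphisms carry the single-coordinate labels $s$ and $t$ to the distinct vertices $s$ and $t$ of $I_1$. In particular the labels $s,t$ are distinct in both $F_\otimes(V,E)$ and $F_\otimes(E,V)$.

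The case analysis is then immediate. If the two coincident labels differ only in the first coordinate, say $(s,b_0)=(t,b_0)$, then applying $F_\otimes(\id[E],\sigma)$ to both sides gives $(s,\sigma b_0)=(t,\sigma b_0)$, and together these read precisely as $(s,s)=(t,s)$ and $(s,t)=(t,t)$. If they differ only in the second coordinate, the symmetric use of $F_\otimes(\sigma,\id[E])$ yields $(s,s)=(s,t)$ and $(t,s)=(t,t)$. If they differ in both coordinates, the identification is necessarily of the form $(s,s)=(t,t)$ or $(s,t)=(t,s)$; in either case, $F_\otimes(r,\id[E])$ sends the common vertex simultaneously to the labels $s$ and $t$ of $F_\otimes(V,E)$, contradicting the distinctness established above. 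Finally, if both ``one-coordinate'' types of identification occur simultaneously, then both conclusions hold, i.e., we are in the ``both'' case.

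The step I expect to require the most care is the verification that the labels $s$ and $t$ in $F_\otimes(V,E)$ and $F_\otimes(E,V)$ are genuinely distinct; this is the only part of the argument that goes beyond bare functoriality of $F_\otimes$, and it relies on the unit axiom together with the naturality of the left and right unitors. Once this point is secured, the remainder of the proof is a short bookkeeping manipulation via the defining relations of $\mathbb{G}$.
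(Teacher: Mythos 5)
Your proof is correct. For a pair of coincident labels differing in exactly one coordinate you transport labels with $(\id,\sigma)$ and $(\sigma,\id)$, which is exactly what the paper does for its cases (1)$\Rightarrow$(2) and (3)$\Rightarrow$(4). Where you genuinely diverge is the diagonal case $(s,s)=(t,t)$ (or $(s,t)=(t,s)$, which the paper's list of five coincidences in fact omits, though it is handled the same way): the paper stays inside bare functoriality of $F_\otimes$, applying the idempotents $(\id,sr)$ and $(sr,\id)$ to show that a diagonal identification forces both one-coordinate identifications and hence lands in the ``both'' branch of the conclusion; you instead rule the diagonal case out entirely by pushing the common vertex along $(r,\id)$ into $F_\otimes(V,E)=I_0\otimes I_1$ and using naturality of the unitor to see that the labels $s$ and $t$ there are distinct vertices. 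Your version proves something strictly stronger (a diagonal identification cannot occur at all) at the price of importing the unit axiom, which is legitimate here since \cref{assumption4} fixes a closed symmetric monoidal structure and \cref{cor: graph_unit} pins the unit down as $I_0$; the paper's version is weaker but purely equational in the relations of $\mathbb{G}$ and needs no monoidal input. You are also right to flag the distinctness of the labels $s$ and $t$ in $F_\otimes(V,E)$ as the delicate point: the isomorphism $F_\otimes(V,E)\iso I_1$ alone does not prevent both labels from landing on the same vertex, and the naturality-of-unitor argument is precisely what closes that gap. Either route establishes the stated disjunction.
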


\begin{proof}
    Suppose that $F_\otimes(E,E)$ contains a double-labelled vertex. 
    Then at least one of the following must hold:
    \begin{enumerate}
        \item $(s,s) = (t,s)$
        \item $(s,t) = (t,t)$
        \item $(s,s) = (s,t)$
        \item  $(t,s) = (t,t)$
        \item $(s,s)=(t,t)$
        \item $(s,t)=(t,s)$
    \end{enumerate}
    We show that (1) if and only if (2), (3) if and only if (4), (5) implies both (1) and (3), and hence also (2) and (4), and finally that (6) implies (5).

    \textbf{(1) $\Longleftrightarrow$ (2):} Suppose $(s,s) = (t,s)$. Then:
    \begin{align*}
        (s,t) &= (\id, \sigma)(s,s)\\
        &=(\id,\sigma)(t,s)\\
        &=(t,t)
    \end{align*}
    
    Now suppose $(t,t)=(s,t)$. Then:
    \begin{align*}
        (t,s)&= (\id,\sigma)(t,t) \\
        &=(\id,\sigma)(s,t) \\
        &=(s,s)
    \end{align*}
    
    \textbf{(3) $\Longleftrightarrow$ (4):} Suppose $(s,s) = (s,t)$. Then:
    \begin{align*}
        (t,s) &= (\sigma, \id)(s,s)\\
        &=(\sigma,\id)(s,t)\\
        &=(t,t)
    \end{align*}

    Now suppose $(t,t)=(t,s)$. Then:
    \begin{align*}
        (s,t) &= (\sigma, \id)(t,t) \\
        &= (\sigma, \id)(t,s) \\
        &= (s,s)
    \end{align*}

    \textbf{(5) $\implies$ (1):} Suppose $(s,s) = (t,t)$. Then:
    \begin{align*}
        (s,s) &= (\id, sr)(s,s)\\
        &=(\id,sr)(t,t)\\
        &=(t,s)
    \end{align*}

    \textbf{(5) $\implies$ (3):} Suppose $(s,s) = (t,t)$. Then:
    \begin{align*}
        (s,s) &= (sr, \id)(s,s)\\
        &=(sr,\id)(t,t)\\
        &=(s,t)
    \end{align*}

    \textbf{(6) $\implies$ (5):} Suppose $(s,t)=(t,s)$. Then:
    \begin{align*}
        (s,s) &= (\sigma,\id)(t,s) \\
        &= (\sigma, \id)(s,t) \\
        &= (t,t)
    \end{align*}

    Thus, if $F_\otimes(E,E)$ has a double label, either $(s,s) = (t,s)$ and $(s,t) = (t,t)$, $(s,s) = (s,t)$ and $(t,s) = (t,t)$, or both.
\end{proof}

\begin{lemma}\label{no double labels}
    The graph $F_\otimes(E,E)$ cannot be such that any vertex has more than one label.
\end{lemma}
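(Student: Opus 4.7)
The plan is to derive a contradiction from the assumption that some vertex of $F_\otimes(E,E) = I_1 \otimes I_1$ carries two distinct labels. By \cref{double labels}, we fall into at least one of two cases: (A) $(s,s) = (t,s)$ and $(s,t) = (t,t)$, or (B) $(s,s) = (s,t)$ and $(t,s) = (t,t)$.

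First, I would determine the action of the symmetry isomorphism $s_{I_1, I_1} \colon I_1 \otimes I_1 \to I_1 \otimes I_1$ on the four labelled vertices. Naturality of $s$ applied to the pairs of vertex-inclusion maps $\yo(a), \yo(b) \colon I_0 \to I_1$ (with $a, b \in \{s, t\}$), together with $s_{I_0, I_0} = \id[I_0]$, shows that on labels $s_{I_1, I_1}$ acts by $(a, b) \mapsto (b, a)$.

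Second, I would combine this coordinate swap with the case hypothesis to collapse all four labels to a single vertex. In case (A), the vertex $(s, s) = (t, s)$ admits two descriptions; applying $s_{I_1, I_1}$ to each gives $(s, s) = s_{I_1, I_1}((s, s)) = s_{I_1, I_1}((t, s)) = (s, t)$, and combining with the hypothesis forces all four labels to agree. Case (B) is handled symmetrically, by applying $s_{I_1, I_1}$ to $(s, s) = (s, t)$.

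Third, I would invoke the epimorphism $\partial \otimes \id[I_1] \colon I_1 \sqcup I_1 \to I_1 \otimes I_1$ established in the proof of \cref{at most 4}, whose image on vertices is exactly the four labels. Since these all coincide, the map is surjective onto a single vertex, yielding $I_1 \otimes I_1 \cong I_0$.

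Finally, I would extract a cardinality contradiction from closure. The adjunction gives $\Graph(I_1 \otimes I_1, I_1) \cong \Graph(I_1, \hom(I_1, I_1))$. The left-hand side equals $\Graph(I_0, I_1)$, which has exactly two elements. The right-hand side, however, contains at least $|\hom(I_1, I_1)_V|$ constant maps, and $\hom(I_1, I_1)_V = \Graph(I_0 \otimes I_1, I_1) = \Graph(I_1, I_1)$ has four elements (the identity, the swap $\sigma$, and the two constant maps on $s$ and $t$). The inequality $2 < 4$ is the desired contradiction, ruling out the doubly-labelled hypothesis. The crux of the argument is the symmetry calculation in the first step; once it is in hand, each subsequent deduction is essentially forced.
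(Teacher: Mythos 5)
Your proof is correct, but it takes a genuinely different and substantially shorter route than the paper's. The paper never uses the symmetry of $\otimes$ in this lemma: after \cref{double labels} it fixes one of the two identification patterns, splits into cases according to how $(sr,\id)$ acts on unlabelled vertices, and derives a contradiction with unitality by explicitly computing the colimits defining $I_0 \otimes I_1$ (all vertices collapse to one class) and $I_0 \otimes I_3$ (the auxiliary set $\mathsf{Inv}^\sigma$ of unlabelled vertices survives and produces extra vertices). Your argument instead uses naturality of the braiding to show that $s_{I_1,I_1}$ swaps coordinates of labels, which forces \emph{both} cases of \cref{double labels} to hold at once, so all four labels coincide; then the epimorphism $\partial \otimes \id$ from \cref{at most 4} shows every vertex of $I_1 \otimes I_1$ is labelled, giving $I_1 \otimes I_1 \iso I_0$; and the hom-set count $\lvert\Graph(I_1 \otimes I_1, I_1)\rvert = 2 < 4 \le \lvert\Graph(I_1, \hom(I_1,I_1))\rvert$ contradicts the adjunction. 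This buys a proof with no colimit-diagram analysis and no bookkeeping of unlabelled vertices; what it costs is symmetry --- the paper's argument, read closely, rules out double labels for any closed monoidal structure with unit $I_0$, whereas yours genuinely needs the braiding. One small point to spell out: for the image of $\partial \otimes \id$ to be exactly the labelled vertices you need that \emph{both} vertices of $I_0 \otimes I_1$ are labelled; this follows in one line from naturality of the unitor $\lambda_{I_1} \from I_0 \otimes I_1 \iso I_1$, which sends the vertex labelled $a$ to $a$ and is a bijection on vertices. With that filled in, every step is sound.
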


\begin{proof}
    Suppose $F_\otimes(E,E)$ contains a double labelled vertex. 
    We show that $\otimes$ is not closed symmetric monoidal. 
    By \cref{double labels}, we have two possibilities to consider. 
    Either $(s,s) = (t,s)$ and $(s,t) = (t,t)$ or $(s,s) = (s,t)$ and $(t,s) = (t,t)$. 
    For this proof, assume that $(s,s) = (s,t)$ and $(t,s) = (t,t)$. 
    The proof where $(s,s) = (t,s)$ and $(s,t) = (t,t)$ is identical, the coordinates are just reversed. 
    Also note that we are not assuming both are not the case, just that at least  $(s,s) = (s,t)$ and $(t,s) = (t,t)$.
    To show that $\otimes$ is not closed symmetric monoidal, we show that $I_0$ is not the unit, which must be the case if $\otimes$ was closed symmetric monoidal by \cref{cor: graph_unit}. 

    Before we begin, first note that the graph $F_\otimes(E,E)$ may have vertices that are unlabelled. These unlabelled vertices are vertices that are not mapped to by any map of the form $F_\otimes(a,b) \colon I_0 \to F_\otimes(E,E)$, where $a,b \in \{s,t\}$.
    We break the proof into two separate cases, based on how the map $(sr, \id): F_\otimes(E,E) \to F_\otimes(E,E)$ acts on any unlabelled vertices:

    \begin{description}
        \item[Case 1:] The map $(sr, \id)$ is not such that $(sr, \id)(a) = b$ for some (not necessarily distinct) unlabelled vertices $a$ and $b$.
        \item[Case 2:] The map $(sr, \id)$ is such that $(sr, \id)(a) = b$ for some (not necessarily distinct) unlabelled vertices $a$ and $b$.
    \end{description}
    The proof goes roughly as follows: 
    
    In case (1), we consider the product $I_0 \otimes I_1$. 
    We show that $I_0 \otimes I_1 = I_0$, which contradicts unitality. 
    This is roughly because, in the colimit diagram, all vertices get related through the double labelled vertices along with the maps of the form $(sr, \id)$.

    In case (2), we consider the product $I_0 \otimes I_3$. We show that $I_0 \otimes I_3 \neq I_3$, again contradicting unitality. 
    This roughly happens because certain unlabelled vertices do not end up getting related to any other vertices, which causes there to be extra vertices in the colimit. More formally, we have:

    \textbf{Case 1:} Suppose the map $(sr, \id)$ sends any unlabelled vertices in $F_\otimes(E,E)$ to labelled ones (or that there are no unlabelled vertices). 
    Consider the product $I_0 \otimes I_1$. 
    Recall from \cref{comma cat ex} that the category $(\yo \times \yo \downarrow \bigl(I_0,I_1)\bigr)_{(E,E)}$ looks like: 

    \begin{center}
\begin{tikzcd}
\textsf{Const}_s \arrow[rr, "{(-,sr)}"] \arrow[rrdd, "{(-,tr)}"'] \arrow["{(-,-)}"', loop, distance=2em, in=215, out=145] &  & \textsf{Idty} \arrow[dd, "{(-,\sigma)}", shift left] \arrow["{(-,\id)}"', loop, distance=2em, in=125, out=55]  &  & \textsf{Const}_t \arrow[ll, "{(-,tr)}"'] \arrow[lldd, "{(-,sr)}"] \arrow["{(-,-)}"', loop, distance=2em, in=35, out=325] \\
                                                                                                           &  &                                                                                                    &  &                                                                                                           \\
                                                                                                           &  & \textsf{Flip} \arrow[uu, "{(-,\sigma)}", shift left] \arrow["{(-,\id)}"', loop, distance=2em, in=305, out=235] &  &                                                                                                          
\end{tikzcd}                                     
\end{center}
Our goal is to show that all vertices in the colimit diagram get related, and thus belong to the same equivalence class. 
This would show that the colimit has only one vertex, and is thus $I_0$. 

First, we show that all labelled vertices are in the same equivalence class. 
Let $v$ be a vertex labelled $(s,s)$, and thus also $(s,t)$ in $F_\otimes\bigl(\pi(\mathsf{Idty})\bigr)$. 
Consider the map $(sr, sr) \colon F_\otimes\bigl(\pi(\textsf{Const}_s)\bigr) \to F_\otimes\bigl(\pi(\textsf{Idty})\bigr)$. 
This map is constant with a value of $v$, since for a vertex $w \in F_\otimes\bigl(\pi(\textsf{Const})\bigr)$:
\begin{align*}
    (sr,sr)(w) &= (s,s) \circ (r,r)(w)\\
    &= (s,s)(I_0) \\
    &= v
\end{align*}
since $v$ has a the label $(s,s)$. 
Likewise, there is a constant map $(sr, tr) \colon F_\otimes\bigl(\pi(\textsf{Const}_t)\bigr) \to F_\otimes\bigl(\pi(\textsf{Idty})\bigr)$ which is constant with a value of $v$. 
What we have just shown is that the vertex labelled both $(s,s)$ and $(s,t)$ in $F_\otimes\bigl(\pi(\textsf{Idty})\bigr)$ belongs to the same equivalence class as all vertices in both $F_\otimes\bigl(\pi(\textsf{Const}_s)\bigr)$ and $ F_\otimes\bigl(\pi(\textsf{Const}_t)\bigr)$. 

We can use the same method, however, to show that this is the case for all labelled vertices in $F_\otimes\bigl(\pi(\textsf{Idty})\bigr)$ and $F_\otimes\bigl(\pi(\textsf{Flip})\bigr)$:
\begin{itemize}
    \item For the vertex labelled $(t,s)$ and $(t,t)$ in $F_\otimes\bigl(\pi(\textsf{Idty})\bigr)$, there are constant maps $(tr,sr)$ and $(tr,tr)$ from $F_\otimes\bigl(\pi(\textsf{Const}_s)\bigr)$ and $F_\otimes\bigl(\pi(\textsf{Const}_t)\bigr)$, respectively.
    \item For the vertex labelled $(s,s)$ and $(s,t)$ in $F_\otimes\bigl(\pi(\textsf{Flip})\bigr)$, there are constant maps $(sr,tr)$ and $(sr,sr)$ from $F_\otimes\bigl(\pi(\textsf{Const}_s)\bigr)$ and $F_\otimes\bigl(\pi(\textsf{Const}_t)\bigr)$, respectively.
    \item For the vertex labelled $(t,s)$ and $(t,t)$ in $F_\otimes\bigl(\pi(\textsf{Flip})\bigr)$, there are constant maps $(tr,tr)$ and $(tr,sr)$ from $F_\otimes\bigl(\pi(\textsf{Const}_s)\bigr)$ and $F_\otimes\bigl(\pi(\textsf{Const}_t)\bigr)$, respectively.
\end{itemize}
Thus, all labelled vertices in both $F_\otimes\bigl(\pi(\textsf{Idty})\bigr)$ and $F_\otimes\bigl(\pi(\textsf{Flip})\bigr)$ belong to the same equivalence class as all vertices in both $F_\otimes\bigl(\pi(\textsf{Const}_s)\bigr)$ and $ F_\otimes\bigl(\pi(\textsf{Const}_t)\bigr)$. 

The only vertices that remain are potentially unlabelled vertices in $F_\otimes\bigl(\pi(\textsf{Idty})\bigr)$ and $F_\otimes\bigl(\pi(\textsf{Flip})\bigr)$, if they exist. 
By assumption, however, the map $(sr,\id)$ must map any unlabelled vertex to labelled ones. 
Since there is a map $(sr,\id)$ from both $F_\otimes\bigl(\pi(\textsf{Idty})\bigr)$ to itself and $F_\otimes\bigl(\pi(\textsf{Flip})\bigr)$ to itself, we have that any unlabelled vertices in these two graphs must also belong to the same equivalence class as all the labelled vertices. Thus, since there is only one equivalence class of vertices in the colimit, we get $I_0 \otimes I_1 = I_0$, contradicting unitality, so $\otimes$ cannot be closed symmetric monoidal.

\textbf{Case 2:} Suppose $(sr,\id)$ is such that $(sr, \id)(a) = b$ for some (not necessarily distinct) unlabelled vertices $a$ and $b$. 
In this case, the trick of considering $I_0 \otimes I_1$ does not work. 
Instead, we must consider $I_0 \otimes I_3$. 
This is more complicated, however, and requires some setup before proceeding. 

We first note that $(sr,\id)(b) = b$. This is because:
\begin{align*}
    (sr, \id)(b) &= (sr, \id)\bigl((sr,\id)(a)\bigr)\\
    &=(sr, \id) \circ (sr, \id)(a)\\
    &= (sr, \id)(a)\\
    &= b
\end{align*}
With this in mind, we construct a set of vertices in $F_\otimes(E,E)$ which we call $\mathsf{Inv}^\sigma$. 
The purpose of this construction, roughly, is to have a set of vertices that do not get related to other vertices in the colimit.

First, add $a$ and $b$ into $\mathsf{Inv}^\sigma$. 
Next, if there exists a vertex $c$ such that $(sr, \id)(c) \in \mathsf{Inv}^\sigma$, add $c$ to $\mathsf{Inv}^\sigma$. 
Repeat this step until no such $c$ exists.
Finally, for every vertex $a \in \mathsf{Inv}^\sigma$, add in $(\id, \sigma)(a)$ and $(\sigma,\id)(a)$. 
Note that we only need to do this once, since $(\id, \sigma)^2 = (\id, \id) = (\sigma,\id)^2$.

Note that $\mathsf{Inv}^\sigma$ cannot contain any labelled vertices. 
This is because none of $(sr, \id)$, $(\sigma \id)$, or $(\id,\sigma)$ can map a labelled vertex to an unlabelled one.
To see this, suppose $v$ is labelled $(x,y)$ for $x,y \in \{s,t\}$. 
This means that $(x,y)(I_0) = v$. Then by functoriality: 
\begin{align*}
    (sr, \id)(v) &= (sr,\id) \circ (x,y)(I_0)\\ 
    &= (srx, \id y)(I_0)\\ 
    &= (s,y)(I_0)
\end{align*}
which is another labelled vertex by definition. 
An identical argument shows that $(\id, \sigma)(v)$ and $(\sigma, \id)(v)$ must also be labelled.
So $\mathsf{Inv}^\sigma$ cannot contain any labelled vertex. 
We thus have a set of unlabelled vertices $\mathsf{Inv}^\sigma$, such that $(sr, \id)(\mathsf{Inv}^\sigma) \subseteq \mathsf{Inv}^\sigma$, $(\id,\sigma)(\mathsf{Inv}^\sigma) \subseteq \mathsf{Inv}^\sigma$, $(\sigma, \id)(\mathsf{Inv}^\sigma) \subseteq \mathsf{Inv}^\sigma$, and for any vertex $v \notin \mathsf{Inv}^\sigma$, $(sr, \id)(v) \notin \mathsf{Inv}^\sigma$. 
In order to show that the vertices in $\mathsf{Inv}^\sigma$ don't get related to other vertices in the colimit, we have the following claim:

\textbf{Claim:}
\begin{itemize}
    \item[(1)] For any map $f$ of the form $(-,\id)$, $f(\mathsf{Inv}^\sigma) \subseteq \mathsf{Inv}^\sigma$
    \item[(2)] For any map $f$ of the form $(-, \sigma)$, $f(\mathsf{Inv}^\sigma) \subseteq \mathsf{Inv}^\sigma $
    \item[(3)] For any map $f$ of the form $(-,\id)$, and any any vertex $v \notin \mathsf{Inv}^\sigma$, $f(w) \notin \mathsf{Inv}^\sigma$
    \item[(4)] For any map $f$ of the form $(-, \sigma)$, and any any vertex $v \notin \mathsf{Inv}^\sigma$, $f(w) \notin \mathsf{Inv}^\sigma$
\end{itemize}

\textbf{Proof of claim:}
For (1), $(sr, \id)$ and $(\sigma, \id)$ are given by construction. 
Consider the map $(tr,\id)$. 
We have that $(sr,\id)(\mathsf{Inv}^\sigma)=(sr,\id) \circ (tr, \id)(\mathsf{Inv}^\sigma)$.
But we know $(sr, \id)(\mathsf{Inv}^\sigma) \subseteq \mathsf{Inv}^\sigma$ and for any $v \notin \mathsf{Inv}^\sigma$, $(sr,\id)(v) \notin \mathsf{Inv}^\sigma$. Thus $(tr, \id)(\mathsf{Inv}^\sigma) \subseteq{\mathsf{Inv}^\sigma}$.

For (2), we have four possibilities: $(\id, \sigma)$, $(sr,\sigma)$, $(tr, \sigma)$, and $(\sigma, \sigma)$. 
By construction, we know that $(\id, \sigma)(\mathsf{Inv}^\sigma) \subseteq \mathsf{Inv}^\sigma$.
Then from (1), the remaining three can all be written as composites of maps $f \circ g$ such that $f(\mathsf{Inv}^\sigma) \subseteq \mathsf{Inv}^\sigma$ and $g(\mathsf{Inv}^\sigma) \subseteq \mathsf{Inv}^\sigma$. 

For (3), there are three possibilities to consider: $(sr, \id)$, $(tr,\id)$, and $(\sigma, \id)$.
$(sr,\id)$ is given by construction. Now, let $v \notin \mathsf{Inv}^\sigma$. 
We have that $(sr,\id)(v) = (sr, \id) \circ (tr,\id)(v)$. 
Thus $(tr, \id)(v) \notin \mathsf{Inv}^\sigma$, else we would have $(sr,\id)(v) \in \mathsf{Inv}^\sigma$, a contradiction. 
We also have that $v = (\sigma, \id) \circ (\sigma, \id)(v)$. So if $(\sigma, \id)(v) \in \mathsf{Inv}^\sigma$, we would get that $(\sigma, \id)(\mathsf{Inv}^\sigma) \not\subseteq \mathsf{Inv}^\sigma$, which is another contradiction. 

Finally, for (4), there are four more cases to consider: $(\id, \sigma)$, $(sr, \sigma)$, $(tr,\sigma)$, and $(\sigma, \sigma)$. 
But $(\id, \sigma)$ can be proven using an identical argument to $(\sigma, \id)$ in (3). 
Then from (3), the remaining three can be written as compositions of maps $f \circ g$, such that for any $v \notin \mathsf{Inv}^\sigma$ $f(v) \notin \mathsf{Inv}^\sigma$ and $g(v) \notin \mathsf{Inv}^\sigma$.
This completes the proof of the claim. 

Note that in a colimit diagram, every graph will have one of these sets of vertices $\mathsf{Inv}^\sigma$.
What we have just shown is that for a given graph, none of the vertices in $\mathsf{Inv}^\sigma$ get related to any other vertices by any maps of the form $(-,\id)$ or $(-, \sigma)$, except possibly vertices in $\mathsf{Inv}^\sigma$ for another graph. 
We use this fact to arrive at a contradiction when considering $I_0 \otimes I_3$.

Consider two vertices $v$ and $w$ connected by an edge in $I_3$. 
We know there exists a graph in the colimit diagram corresponding to the edge from $v$ to $w$.
Call this graph $\mathsf{Idty}_{vw}$.
For a graph of this form, we refer to the set $\mathsf{Inv}^\sigma \subset (\mathsf{Idty}_{vw})_V$ by $\mathsf{Inv}^\sigma_{vw}$.
There is also a graph $\mathsf{Idty}_{wv}$ corresponding to the edge going in the opposite direction, which is analogous to $\mathsf{Flip}$ in the case of $I_0 \otimes I_1$.
There also exist two more objects, one corresponding to the loops on each vertex $v$ and $w$.
Call these objects $\mathsf{Const}_v$ and $\mathsf{Const}_w$, respectively. 

We know that both $\mathsf{Idty}_{vw}$ and $\mathsf{Idty}_{wv}$ both a set of unlabelled vertices $\mathsf{Inv}^\sigma_{vw}$ and $\mathsf{Inv}^\sigma_{wv}$, respectively, which we constructed above. 
Our goal is to show that no vertex in $\mathsf{Inv}^\sigma_{vw} \cup \mathsf{Inv}^\sigma_{wv}$ belongs to an equivalence class containing any vertex not in $\mathsf{Inv}^\sigma_{vw} \cup \mathsf{Inv}^\sigma_{wv}$.
We showed above that any maps from either $\mathsf{Idty}_{vw}$, or $\mathsf{Idty}_{wv}$, to themselves, or between the two, do not relate the vertices in $\mathsf{Inv}^\sigma_{vw} \cup \mathsf{Inv}^\sigma_{wv}$ to any vertices not in $\mathsf{Inv}^\sigma_{vw} \cup \mathsf{Inv}^\sigma_{wv}$. 
This is because maps between these graphs, or to themselves, are of the form $(-,\id)$ and $(-,\sigma)$, as was the case in \cref{comma cat ex}.

There also do not exist any maps from either of these graphs to any other ones, and the only other maps to these graphs are from $\mathsf{Const}_v$ and $\mathsf{Const}_w$. 
These maps are of the form $(\id, sr)$, $(\id,tr)$, or $(ar,br)$ for $a,b \in \{s,t\}$. 
We show that these maps must not send any vertex in $\mathsf{Const}_v$ and $\mathsf{Const}_w$ to any vertex $a \in \mathsf{Inv}^\sigma_{vw}$, and the case of $\mathsf{Inv}^\sigma_{wv}$ is identical. 
The maps $(ar,br)$ are obvious since they're constant to the labelled vertices.
For the other maps, we have $(sr, \id) \circ (\id,sr) = (sr,sr)$ which must map all vertices to the vertex $(s,s)$. 
Then if $(\id,sr)(v) \in \mathsf{Inv}^\sigma_{vw}$, we'd get $(sr,sr)(v) \in \mathsf{Inv}^\sigma_{vw}$ since $(sr,\id)(\mathsf{Inv}^\sigma_{vw}) \subseteq \mathsf{Inv}^\sigma_{wv}$. This is a contradiction, so this cannot be the case.
The same can be said for $(tr, \id)$.

Thus, for a pair of distinct vertices $v,w$ with an edge between them, no vertex in $\mathsf{Inv}^\sigma_{vw} \cup \mathsf{Inv}^\sigma_{wv}$ belongs to an equivalence class containing any vertex not in $\mathsf{Inv}^\sigma_{vw} \cup \mathsf{Inv}^\sigma_{wv}$.

Now, for every edge $e$ from vertices $v$ to $w$ in $I_3$, choose a vertex $a_e$ in $\mathsf{Inv}^\sigma_{vw}$, and let $[a_e]$ be the equivalence class in the colimit that $a_e$ belongs to. 
Then, we can choose edges $e_1, e_2$ and $e_3$, such that no pair of these edges is between the same two vertices. We then have three distinct equivalence classes $[a_{e_1}]$, $[a_{e_2}]$, and $[a_{e_3}]$, such that there is not an edge between any two of them.
This is because if vertices $a$ and $b$ are in separate equivalence classes, they must be in separate graphs.
Then, in the product $I_0 \otimes I_3$, there exists a set of three vertices, none of which are connected by edges.
Since no such set exists in $I_3$, we have that $I_0 \otimes I_3 \neq I_3$, and thus $\otimes$ is not monoidal.
\end{proof}

\begin{corollary} \label{cor:exactly 4}
    The graph $F_\otimes(E,E)$ must be a graph of four vertices, such that each vertex has exactly one label. 
\end{corollary}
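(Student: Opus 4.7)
The plan is to combine the two preceding results to pin down both the vertex count and the labelling structure of $F_\otimes(E,E)$, without needing any further analysis of colimit diagrams. First, I would note that \cref{at most 4} already supplies an upper bound: since $I_1 \otimes I_1 \iso F_\otimes(E,E)$ (by the pointwise Kan extension formula applied to representables, or directly from $\otimes \iso \lan_{\yo\times\yo} F_\otimes$), the graph $F_\otimes(E,E)$ has at most four vertices.

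For the lower bound, I would appeal to \cref{no double labels}. By \cref{def:vertex representation}, the four vertices labelled $(s,s)$, $(s,t)$, $(t,s)$, and $(t,t)$ are determined by the images $F_\otimes(a,b)(I_0)$ for $a,b \in \{s,t\}$, and these images a priori might coincide. However, \cref{no double labels} states precisely that no vertex of $F_\otimes(E,E)$ can carry more than one label, so the four labels must be realised by four \emph{distinct} vertices. Hence $F_\otimes(E,E)$ has at least four vertices.

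Combining the two bounds forces $F_\otimes(E,E)$ to have exactly four vertices. Since each of the four labels picks out a different vertex and there are only four vertices available, every vertex of $F_\otimes(E,E)$ is labelled, and by \cref{no double labels} again, each is labelled exactly once. I expect no obstacles in this step: the hard work has already been done in proving \cref{no double labels}, and the corollary is just a bookkeeping combination of the upper and lower bounds.
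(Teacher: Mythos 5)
Your proposal is correct and matches the paper's own proof: both derive the upper bound from \cref{at most 4} (using that $F_\otimes(E,E) = I_1 \otimes I_1$ by definition of $F_\otimes$) and the lower bound from \cref{no double labels}, then conclude by the same pigeonhole observation that all four vertices must be distinctly and exhaustively labelled. No gaps.
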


\begin{proof}
    By \cref{at most 4} we know that $F_\otimes(E,E)$ cannot have more than four vertices. 
    We also know by \cref{no double labels} that $F_\otimes(E,E)$ cannot have any double labels.
    Thus, $F_\otimes(E,E)$ must be a graph with exactly four vertices, as if it had fewer we would have at least one double label. 
    We also have that each vertex in $F_\otimes(E,E)$ must then have exactly one label, since if there was an unlabelled vertex, we would have to again have at least one double-labelled vertex.
\end{proof}

We are now ready to prove the main result of this paper:

\begin{theorem} \label{TH:monoidal structures}
Let $F_\otimes$ and $\otimes$ be as in \cref{assumption4}. Then either $\otimes = \boxtimes$ or $\otimes = \square$. 
\end{theorem}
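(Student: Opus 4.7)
My plan is to finish the classification by determining precisely which edges $F_\otimes(E,E)$ can have. By \cref{cor:exactly 4} the vertex set already consists of the four distinctly labeled points $(s,s), (s,t), (t,s), (t,t)$, and by \cref{cor:colimit formula} the full monoidal product $\otimes$ is determined by $F_\otimes(E,E)$. Matching against \cref{def:graph products}, it therefore suffices to show that the edge set of $F_\otimes(E,E)$ is either that of the 4-cycle $C_4$ (yielding $\square$) or that of the complete graph $K_4$ (yielding $\boxtimes$).

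My first step is to establish that the 4-cycle with vertices $(s,s)-(s,t)-(t,t)-(t,s)-(s,s)$ is always contained in $F_\otimes(E,E)$. For this, I would consider the four graph maps $\yo(s) \otimes \id_{I_1}$, $\yo(t) \otimes \id_{I_1}$, $\id_{I_1} \otimes \yo(s)$, $\id_{I_1} \otimes \yo(t)$, each of which has domain $I_0 \otimes I_1$ or $I_1 \otimes I_0$. Composing with the appropriate unit isomorphism yields a graph map $I_1 \to I_1 \otimes I_1$; for instance, $(\yo(s) \otimes \id_{I_1}) \circ \lambda^{-1}$ sends vertex $a \in I_1$ to $F_\otimes(s,a)(I_0)$, i.e., the vertex labeled $(s,a)$, and hence carries the edge of $I_1$ to an edge from $(s,s)$ to $(s,t)$. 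The other three maps produce the remaining three edges of the cycle by symmetric reasoning.

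My second step is to show that the two possible diagonal edges $(s,s)-(t,t)$ and $(s,t)-(t,s)$ must occur together or not at all. The morphism $(\id_E, \sigma) \colon (E,E) \to (E,E)$ in $\mathbb{G} \times \mathbb{G}$ satisfies $(\id_E, \sigma)^2 = \id$ since $\sigma^2 = \id_E$, so $F_\otimes(\id_E, \sigma)$ is a graph automorphism of $F_\otimes(E,E)$. A direct computation using \cref{def:vertex representation} shows that it sends the vertex labeled $(a,b)$ to the vertex labeled $(a, \sigma b)$, so it swaps $(s,s) \leftrightarrow (s,t)$ and $(t,s) \leftrightarrow (t,t)$; under this automorphism the two diagonals are interchanged, forcing either both or neither to occur.

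Combining the two steps, $F_\otimes(E,E)$ is either $C_4$ or $K_4$, so $\otimes$ is either $\square$ or $\boxtimes$ as claimed. The main bookkeeping obstacle is Step 1: one must verify that the composite of $\yo(a) \otimes \id$ (or $\id \otimes \yo(a)$) with the unit isomorphism genuinely sends the labeled vertices of $I_1$ to the correctly labeled vertices of $I_1 \otimes I_1$. Once this identification is made explicit through the definition of labels and the bifunctoriality of $\otimes$, the rest is formal.
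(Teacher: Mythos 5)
Your proposal is correct and follows essentially the same route as the paper: the paper also first forces the four edges of $C_4$ by observing that $F_\otimes(a,\id)\from F_\otimes(V,E)\to F_\otimes(E,E)$ must be a graph map sending $s\mapsto(a,s)$ and $t\mapsto(a,t)$ (which is the same map as your $(\yo(a)\otimes\id)\circ\lambda^{-1}$, just phrased without the unit isomorphism), and then shows the two diagonals occur together or not at all using the involution $F_\otimes(\sigma,\id)$ in place of your $F_\otimes(\id_E,\sigma)$. The only differences are cosmetic.
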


\begin{proof}
We know by \cref{cor:exactly 4} that $F_\otimes(E,E)$ is a graph of four vertices all with distinct labels.
Note that since there are no vertices in $F_\otimes(E,E)$ with double labels, then there also cannot be vertices in $F_\otimes(E,V)$ or $F_\otimes(V,E)$ with double labels.
If this were the case, the maps from $F_\otimes(E,V)$ or $F_\otimes(V,E)$ would result in double labelled vertices in $F_\otimes(E,E)$. 
Thus, in both $F_\otimes(E,V)$ and $F_\otimes(V,E)$, one vertex must be labelled $s$ and the other $t$.

We now show that $F_\otimes(E,E)$ must be either $C_4$ or $K_4$. 
As always, label the four vertices $(s,s),(s,t),(t,s)$ and $(t,t)$.
We first show that we must have at least the condition that if $(a,a')$ and $(b,b')$ are such that $a=b$ or $a'=b'$ then $(a,a') \sim (b,b')$. 
First consider $(a,s)$ and $(a,t)$ (the other case is analogous). 
Suppose there is no edge between $(a,s)$ and $(a,t)$. 
We show that $F$ is not a functor. 
Consider the morphism $(a, \id) \colon  (V,E) \to (E,E)$. 
Then the image of this morphism must send the $s$ vertex of $I_1$ to $(a,s)$ and the $t$ vertex to $(a,t)$. 
But this is not a graph map since $s$ and $t$ have an edge between them in $I_1$, but $(a,s)$ and $(a,t)$ don't in $F_\otimes(E,E)$.
The case of $(s,a)$ and $(t,a)$ is identical.
Thus $F_\otimes$ is not a functor. 
So, if $F_\otimes$ is a functor, we must have that $(a,a') \sim (b,b')$ whenever $a=a'$ or $b=b'$.
Thus $C_4$ must be a subgraph of $F_\otimes(E,E)$.

Now suppose $F_\otimes(E,E)$ has one of the diagonal edges. Assume that $(s,s) \sim (t,t)$. 
We know that $(\sigma, \id)$ must be a graph map. 
But this map sends $(s,s)$ to $(t,s)$ and $(t,t)$ to $(s,t)$. 
Since it's a graph map we get that $(s,t) \sim (t,s)$. 
Similarly if $(s,t) \sim (t,s)$ we must have $(s,s) \sim (t,t)$. 
So if $F_\otimes(E,E)$ is not $C_4$, then it must be $K_4$.
Note that if $F_\otimes(E,E)$ is $K_4$ or $C_4$ with no unlabelled vertices, then the action of $F_\otimes$ on any morphism is fully determined by its action on $(s,s), (s,t),(t,s)$ and $(t,t)$ which are already defined.

Finally, by \cref{Prop: monoidal as kan extensions}, the first case corresponds to the functor resulting in the product $\square$ and the second case corresponds to the functor resulting in the product $\boxtimes$.
Thus, if $F_\otimes$ is a functor such that $\otimes$ is a closed symmetric monoidal structure, then $\otimes$ must be either $\boxtimes$ or $\square$. 
\end{proof}
\begin{theorem} \label{main th}
    There are only two closed symmetric monoidal products on the category $\Graph$: the box and categorical products.
\end{theorem}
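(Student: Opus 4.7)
The plan is to assemble the preceding work into a clean two-direction argument. For one direction, I would cite the proposition stated just after \cref{def:graph products}, which verifies that both $\square$ and $\boxtimes$ are closed symmetric monoidal on $\Graph$ (with unit $I_0$ and the internal homs spelled out explicitly). That takes care of existence.

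For the other direction, I would fix an arbitrary closed symmetric monoidal structure $\otimes$ on $\Graph$ and argue it must be one of the two. This is precisely the statement of \cref{TH:monoidal structures}, so the heart of the proof is already done: \cref{Prop: monoidal as kan extensions} lets us recover $\otimes$ from $F_\otimes = \otimes \circ (\hat{\yo} \times \hat{\yo})$ by left Kan extension, \cref{cor:colimit formula} reduces the relevant data to $F_\otimes(E,E)$, \cref{cor:exactly 4} forces this graph to have four vertices carrying all four distinct labels $(s,s), (s,t), (t,s), (t,t)$, and then \cref{TH:monoidal structures} identifies the only two graphs on those four labelled vertices compatible with functoriality of $F_\otimes$, namely $C_4$ and $K_4$, which yield $\square$ and $\boxtimes$ respectively.

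The main obstacle here is purely bookkeeping rather than mathematical: I need to make sure to invoke \cref{assumption4} correctly, i.e., declare at the outset that the chosen $\otimes$ satisfies the standing hypotheses of the previous section, and then note that \cref{TH:monoidal structures} produces the identification $\otimes \iso \square$ or $\otimes \iso \boxtimes$ through the Kan extension formula rather than on the nose. Since the two products on representables determined by $F_\otimes(E,E) = C_4$ and $F_\otimes(E,E) = K_4$ agree with the defining functors for $\square$ and $\boxtimes$, \cref{Prop: monoidal as kan extensions} applied to $\square$ and $\boxtimes$ separately gives $\otimes \iso \square$ or $\otimes \iso \boxtimes$ as monoidal structures on all of $\Graph$. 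Combined with the existence direction, this yields exactly two such structures, completing the proof.
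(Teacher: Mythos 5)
Your proposal is correct and follows essentially the same route as the paper: invoke \cref{Prop: monoidal as kan extensions} to realize any closed symmetric monoidal $\otimes$ as $\lan_{\yo\times\yo}F_\otimes$, then apply \cref{TH:monoidal structures} to conclude $\otimes \iso \square$ or $\otimes \iso \boxtimes$. Your explicit mention of the existence direction (citing the earlier proposition that $\square$ and $\boxtimes$ are indeed closed symmetric monoidal) is a small but welcome addition that the paper leaves implicit.
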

\begin{proof}
By \cref{TH:monoidal structures}, we know that if a closed symmetric monoidal product $\otimes$ arises as a Kan extension $\otimes \iso \lan_{\yo \times \yo} F_\otimes$, then either $\otimes \iso \square$ or $\otimes \iso \boxtimes$. But, by \cref{Prop: monoidal as kan extensions}, every closed symmetric monoidal product must arise in this manner.
\end{proof}
 \bibliographystyle{amsalphaurlmod}
 \bibliography{all-refs.bib}

 \appendix
 \renewcommand{\thesection}{\Alph{section}}

\end{document}